\numberwithin{equation}{section}
\newcommand\ces{\mathsf{C}}
\newcommand\ices{\mathsf{C}^{-1}}
\newcommand\cn{\mathbb{C}}
\newcommand\rn{\mathbb{R}}
\newcommand\cnn{\mathbb{C}^\mathbb{N}}
\newcommand\nn{\mathbb{N}}
\newcommand\kotz{\lambda_0(A)}
\newcommand\ptsp{\sigma_{pt}(\mathsf{C};\lambda_0(A))}
\newcommand\spec{\sigma(\mathsf{C};\lambda_0(A))}
\newcommand\clo{\mathcal{L}}
\newcommand\proj{\operatorname{proj}}
\newcommand\igoes{\xrightarrow[i]{}}
\newcommand\sumi{\sum_{i=1}^\infty}
\newcommand\limi{\lim_{i \to \infty}}
\newcommand\supi{\sup_{i \in \nn}}
\newcommand\sumj{\sum_{j=1}^\infty}
\theoremstyle{thmit} 
\newtheorem{theorem}{Theorem}
\newtheorem{lemma}{Lemma}
\newtheorem{corollary}{Corollary}
\newtheorem{proposition}{Proposition}
\newtheorem{example}{Example}
\newtheorem{remark}{Remark}
\begin{document}

\author{Ersin Kızgut}
\address{Universitat Politècnica de València \\ Instituto Universitario de Matemática Pura y Aplicada (IUMPA)\\ E-46071 Valencia, Spain}
\email{erkiz@upv.es}

\title{The Cesàro operator on smooth sequence spaces of finite type}

\date{\today}

\keywords{Cesàro operator, smooth sequence spaces of finite type, generalized power series spaces, spectrum, Fréchet space}
\subjclass[2010]{47A10, 47B37, 46A45, 46A04}

\thanks{This article has been accepted in Rev. R. Acad. Cienc. Exactas Fís. Nat. Ser. A Math. RACSAM}

\maketitle

\begin{abstract}
	The discrete Cesàro operator $\mathsf{C}$ is investigated in the class of smooth sequence spaces $\lambda_0(A)$ of finite type. This class contains properly the power series spaces of finite type. Of main interest is its spectrum, which is distinctly different in the cases when $\lambda_0(A)$ is nuclear and when it is not. The nuclearity of $\lambda_0(A)$ is characterized via certain properties of the spectrum of $\mathsf{C}$. Moreover, $\mathsf{C}$ is always power bounded and uniformly mean ergodic on $\lambda_0(A)$.
\end{abstract}

\section{Introduction}
	
The discrete Cesàro operator $\ces$ defined on $\cnn$ is defined by
\[
	\ces x:=\left(x_1, \frac{x_1+x_2}{2}, \frac{x_1+x_2+x_3}{3}, \dots, \frac{x_1+\dots +x_i}{i}, \dots \right), \quad x=(x_i)_{i \in \nn}.
\]
The Cesàro operator $\ces$ has been investigated on many Banach sequence spaces. We refer the reader to the introduction of \cite{ABRwc0}. The behaviour of $\ces$ when acting on the Fréchet spaces $\cnn$, $\ell_{p+}=\bigcap_{q>p}\ell_q,\, 1 \leq p<\infty$, and on the power series space $\Lambda_0(\alpha)$ of finite type was studied in \cite{ABR13, ABR17, ABR18}. In this paper we extend the results of \cite{ABR18} to the strictly more general setting of the smooth sequence spaces $\kotz$ of finite type. These spaces were introduced by Terzioğlu \cite{Ter69, Ter73, Ter75, Ram79}. We also refer to Kocatepe \cite{Koc85, Koc88, Koc89}. Some of our proofs are inspired by \cite{ABR18}, but new ingredients are needed in our setting. We describe precisely our context. Let $A=(a_n)_n$, where $a_n=(a_n(i))_i$. $A$ is called a \textit{Köthe matrix} if the following conditions are satisfied:
\begin{enumerate}[label=(K\arabic*)]
	\item $0 \leq a_n(i) \leq a_{n+1}(i)$, for all $i,n \in \nn$.
	\item For all $i \in \nn$, there exists $n \in \nn$ such that $a_n(i)>0$.
\end{enumerate} 
The \textit{Köthe echelon space of order} 0 associated to $A$ is defined by
\[
\kotz=\{x \in \cnn:\limi a_n(i) \, x_i=0, \, \forall n \in \nn\},
\]
which is a Fréchet space when equipped with the increasing system of seminorms
\[
	p_n(x):=\supi  a_n(i)|x_i|, \quad x \in \kotz,\, n \in \nn.
\]
Then $\kotz=\bigcap_{n \in \nn} c_0(a_n)$, with $c_0(a_n)$ the usual Banach space. We consider the projective limit topology in $\kotz$, that is, $\kotz=\proj_{n \in \nn} c_0(a_n)$. For further reading in the theory of Köthe echelon spaces $\lambda_p(A),\, 1 \leq p \leq \infty$ or $p=0$, see \cite[Section 27]{Vog97}.

The space $\kotz$ is said to be a \textit{smooth sequence space of finite type} (or a $G_1$-space) \cite[Section 3]{Ter69} if $A$ satisfies
\begin{enumerate}[label=(G1-\arabic*)]
	\item $0 < a_n(i+1) \leq a_n(i)$, for all $n \in \nn$ and $i \in \nn$.
	\item For all $n \in \nn$ there exist $m>n$ and $C>0$ such that $a_n(i) \leq C {a_m(i)}^2$, for all $i \in \nn$.
\end{enumerate}
Condition (G1-2) is actually equivalent to say that $\kotz$ is a \textit{Fréchet algebra} with respect to pointwise multiplication. Our first result is probably known. We include it for the sake of completeness and to explain the assumption~(\ref{sequences vanish}) below.
\begin{proposition}\label{G1 Schwartz}
	Let $A$ satisfy (G1-1). Then, either $\kotz$ is normable (and necessarily isomorphic to $c_0$) or $\limi a_n(i)=0$ for all $n \in \nn$.
\end{proposition}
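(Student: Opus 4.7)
The key observation is that condition (G1-1) makes each column sequence $(a_n(i))_{i \in \nn}$ positive and strictly decreasing in $i$, so the limit $\ell_n := \limi a_n(i) \in [0, a_n(1)]$ exists for every $n \in \nn$. The statement is a dichotomy on whether all these limits vanish. My plan is to prove the contrapositive of the implication: if there exists some $n_0 \in \nn$ with $\ell_{n_0} > 0$, then $\kotz$ is isomorphic, as a Fréchet space, to $c_0$.

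Suppose $\ell_{n_0} > 0$. Combining (G1-1) with the Köthe matrix condition (K1), which makes $(a_n(i))_n$ nondecreasing in $n$, I obtain for every $n \geq n_0$ the two-sided bound
\[
    \ell_{n_0} \leq \ell_n \leq a_n(i) \leq a_n(1), \qquad i \in \nn.
\]
The lower bound $a_{n_0}(i) \geq \ell_{n_0} > 0$ together with the defining condition $\limi a_{n_0}(i)|x_i| = 0$ force $x \in c_0$ for every $x \in \kotz$. Conversely, if $x \in c_0$, then for any $n$, $a_n(i)|x_i| \leq a_n(1)|x_i| \to 0$, so $x \in \kotz$. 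Hence $\kotz = c_0$ as a set.

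For the topologies, the same inequalities yield $p_n(x) \leq a_n(1)\sup_i|x_i|$ for every $n$, so each seminorm $p_n$ is continuous with respect to the sup-norm, while $p_{n_0}(x) \geq \ell_{n_0}\sup_i|x_i|$ shows the sup-norm is continuous with respect to $p_{n_0}$. Therefore the projective topology of $\kotz$ agrees with the $c_0$-norm topology, and $\kotz$ is normable and isomorphic to $c_0$, completing the contrapositive.

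I do not foresee a serious obstacle; the whole argument rests on the interaction of (G1-1) and (K1). The only mildly subtle point is the observation that a single seminorm bounded pointwise below by a positive constant already dominates the sup-norm, so one index $n_0$ with $\ell_{n_0} > 0$ suffices to collapse the entire projective limit to a single Banach space.
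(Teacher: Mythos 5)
Your argument is correct and follows essentially the same route as the paper: both hinge on the observation that a single positive limit $\ell_{n_0}=\lim_i a_{n_0}(i)>0$ forces the seminorm $p_{n_0}$ to dominate every $p_n$ (via $a_n(i)\leq a_n(1)\leq \frac{a_n(1)}{\ell_{n_0}}a_{n_0}(i)$), so the projective limit collapses to one Banach space. You go slightly further than the paper's proof by explicitly verifying the set equality $\kotz=c_0$ and the equivalence of $p_{n_0}$ with the sup-norm, which the paper leaves as the parenthetical remark ``necessarily isomorphic to $c_0$.''
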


\begin{proof}
	Since $A$ satisfies (G1-1), there exists $L_n \geq 0$ such that $\limi a_n(i) = L_n$, for all $n \in \nn$, and $L_n \leq a_n(i) \leq a_n(1)$, for all $i,n \in \nn$. Then we have two possibilities
	\begin{enumerate}[label=\normalfont(\arabic*)]
		\item There exists $n_0 \in \nn$ such that $L_{n_0} > 0$,
		\item For all $n \in \nn$, $L_n=0$.
	\end{enumerate}
	In case (1), for all $n \geq n_0$ and for all $i \in \nn$, $0 < L_{n_0} \leq a_{n_0}(i) \leq a_n(i)$. So $0 <L_{n_0} \leq L_n$ for $n \geq n_0$. Therefore $0 < L_{n_0} \leq a_n(i) \leq a_n(1)$, for all $i \in \nn$, for all $n \geq n_0$. The latter implies
	\[
		a_n(i)|x_i| \leq a_n(1) |x_i| = \frac{a_n(1)}{L_{n_0}} L_{n_0} |x_i| \leq \frac{a_n(1)}{L_{n_0}} a_{n_0}(i)|x_i|.
	\]
	Taking supremum on both sides for $i \in \nn$, we obtain
	\[
		p_n(x) \leq \frac{a_n(1)}{L_{n_0}} p_{n_0}(x).
	\]
	Hence $\kotz$ is normable since $p_{n_0}$ is the norm defining the topology of $\kotz$.  
\end{proof}
In the light of Proposition~\ref{G1 Schwartz}, given the $G_1$-space $\kotz$, we shall assume 
\begin{equation}\label{sequences vanish}
\limi a_n(i)=0, \quad \forall n \in \nn ,
\end{equation}
since otherwise the situation is totally clarified in \cite{ABRwc0}. Therefore, in our case, every $G_1$-space $\kotz$ is a Schwartz space, hence a Montel space \cite[3.4]{Ter69}.

A power series space $\Lambda_0(\alpha)=\{x \in \cnn | \limi \exp(-\alpha_i/n)|x_i|=0\}$ of finite type for $\alpha_i \igoes \infty$ is a $G_1$-space (see \cite[Section 29]{Vog97} for power series spaces of finite type). But the converse is false, in general as shown in Example~\ref{g1_non_pss} below. Before that, let us remind: A Fréchet space $E$ with a fundamental system $(p_n(\cdot))_n$ of seminorms is said to have the property (\ul{DN}) \cite[pp. 368]{Vog97} if there exists $k \in \nn$ so that for every $n \in \nn$ there exist $m \in \nn$, $0 < \tau <1$ and $C>0$ with
\[
p_n(x) \leq C \, p_k(x)^{1-\tau}\,p_m(x)^\tau, \quad \forall x \in E. \tag{\ul{DN}}\label{weak-DN}
\]

\begin{example}\label{g1_non_pss}
\normalfont
The space $X:=\{x \in \cnn:\limi a_n(i)x_i=0,\, \forall n \in \nn\}$, where $(a_n(i))_{i,n}:=\{\exp(-n e^{i/n}) \}$, is a nuclear $G_1$-space which is not isomorphic to a power series space of finite type.
\end{example}

\begin{proof}
\begin{enumerate}[wide, labelwidth=!, labelindent=0pt, label=(\textbf{\roman*})]
\item \textit{$X$ is a Köthe echelon space}: For all $i,n \in \nn$ satisfying $i \geq n(n+1)\log(1+\frac{1}{n})$, we have
		\begin{align*}
			& &\frac{i}{n(n+1)}  =  i \left(\frac{1}{n}-\frac{1}{n+1}\right) \geq & \log\left(1+\frac{1}{n}\right) = \log(n+1)-\log(n) \\
			&\Rightarrow &\log(n)+\frac{i}{n}  \geq & \, \frac{i}{n+1}+\log(n+1) \\
			&\Rightarrow & n e^{\frac{i}{n}}  \geq & (n+1) e^{\frac{i}{n+1}}\\
			 &\Rightarrow & \frac{1}{ne^{\frac{i}{n}}}  \leq  & \frac{1}{(n+1)e^{\frac{i}{n+1}}}.
		\end{align*}
Hence we have $a_n(i) \leq a_{n+1}(i)$, for all $i \geq n(n+1)\log(1+\frac{1}{n})$. Since only a finite number of indices for $i$ remain for each $n$, one can inductively choose $\{b_k\}$ each of which greater than 1, such that $a_k(i) \leq b_k(i) a_{k+1}(i)$ is satisfied for all $i \in \nn$. Observing $X \simeq \lambda_0(C)$ with the choice $c_n:=(b_n a_n)$ we deduce that $X$ is a Köthe echelon space of order 0.
		
\item \textit{$X$ satisfies (G1-1)}: Obviously for all $i \in \nn$, one has $\exp(-n e^{\frac{i+1}{n}}) < \exp(-n e^{\frac{i}{n}})$, so $a_n(i+1) < a_n (i)$, for all $i \in \nn$ and (G1-1) is satisfied.
		
\item \textit{$X$ satisfies (G1-2)}: Given $n$, choose $m=2n$. Then, one has $a_m(i)^2=\exp(-2me^{i/m})=\exp(-4ne^{i/{2n}})$. Thus $a_n(i)=\exp(-ne^{i/n}) \leq a_m(i)^2$ if and only if $-ne^{i/n} \leq -4n e^{i/{2n}}$ if and only if $i/n \geq \log(4)+(1/{2n})$ if and only if $i \geq n(2 \log (4))$. Since $1 < \log(4) < 2$, the latter is certainly satisfied for all $i \geq 4n$. So $a_n(i) \leq a_m(i)^2$, for all $i \geq 4n$. Because only a finite number of indices for $i$ remain, and $a_k(i)>0$ for all $i,k \in \nn$, there is $M>0$ such that $a_n(i) \leq Ma_m(i)^2$, for all $i \in \nn$.

\item \textit{$X$ is nuclear}: Observe first $\frac{\log(i)}{i} \igoes 0$. Given $n$, find $i_0 \in \nn$ such that $\frac{\log (i)}{i} \leq \frac{1}{n}$, for all $i>i_0$. Then
		\[
			\log (i)<\frac{i}{n} \quad \Rightarrow \quad i < e^{i/n} < n e^{i/n} \quad \Rightarrow \quad -n e^{i/n} < -i,
		\]
		for all $i>i_0$. Thus, $a_n(i)=\exp(-ne^{i/n})<e^{-i}$. But then, since $X$ is a $G_1$-space, for any $n \in \nn$, one may find $m>n$ and $C>0$ such that
		\[
			\sum_{i=i_0}^\infty \frac{a_n(i)}{a_m(i)} \leq \sum_{i=i_0}^\infty \frac{C\, a_m(i)^2}{a_m(i)} = \sum_{i=i_0}^\infty C\, a_m(i) \leq \sum_{i=i_0}^\infty e^{-i} < \infty.
		\]
		Hence the Grothendieck-Pietsch criterion for nuclearity (see e.g. \cite[Theorem 28.15]{Vog97}) is satisfied, so $X$ is a nuclear $G_1$-space. 
		
\item \textit{$X$ is not isomorphic to a power series space of finite type}: By \cite[Example 5-(5)]{Bra88}, this space fails the property \eqref{weak-DN}. However, by \cite[Lemma 29.12]{Vog97}, every power series space has property \eqref{weak-DN}. Therefore, $X$ cannot be isomorphic to a power series space of finite type. 
\end{enumerate}
 \end{proof}
More examples can be seen in Lemma~\ref{equivalences on Sn} and Remark~\ref{more G1}.

\section{Continuity and compactness of $\ces$ on $\kotz$}
An operator $T$ on a Fréchet space $X$ into itself is called bounded (resp. compact) if there exists a neighborhood $U$ of the origin of $X$ such that $TU$ is a bounded (resp. relatively compact) set in $X$. The following result is well-known (see e.g. \cite[Lemma 25]{ABR18-2}).

\begin{lemma}\label{proj lim cts}
	Let $E=\proj_m E_m$ and $F=\proj_n F_n$ be Fréchet spaces such that $E$ (resp. $F$) is the intersection of the sequence of Banach spaces $E_m$ (resp. $F_n$), $E$ is dense in $E_m$ and $E_{m+1} \subset E_m$ with continuous inclusion for each $m$ (resp. $F$ is dense in $F_n$ and $F_{n+1}\subset F_n$ with continuous inclusion for each $n$). Let $T\colon E \to F$ be a linear operator. Then
	\begin{enumerate}[label=\normalfont(\arabic*)]
		\item $T$ is continuous if and only if for each $n$ there is $m$ such that $T$ has a unique continuous linear extension $T_{m,n}\colon E_m \to F_n$.
		\item Assume $T$ is continuous. Then $T$ is bounded if and only if there is $m$ such that for each $n$, $T$ has a unique continuous linear extension $T_{m,n}\colon E_m \to F_n$.
	\end{enumerate}
\end{lemma}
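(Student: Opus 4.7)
The plan is to convert the continuity and boundedness of $T$ into quantitative estimates between the Banach-space norms $\|\cdot\|_m$ of $E_m$ and $\|\cdot\|_n$ of $F_n$, and then exploit the density of $E$ in each $E_m$ to upgrade those estimates to genuine continuous linear extensions. Under the hypotheses, the projective limit topology on $E$ is determined by the fundamental system of seminorms $(\|\cdot\|_m)_m$ (increasing up to constants, thanks to the continuous inclusions $E_{m+1}\subset E_m$), and analogously on $F$; moreover, a base of $0$-neighborhoods in $E$ is given by the scaled unit balls $\varepsilon B_m$, where $B_m=\{x\in E:\|x\|_m\le 1\}$.

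For part (1), continuity of $T\colon E\to F$ is equivalent, by the usual seminorm characterization, to the condition that for every $n$ there exist $m$ and $C>0$ with $\|Tx\|_n\le C\,\|x\|_m$ for all $x\in E$. If such an estimate holds, then $T$ viewed as a map $(E,\|\cdot\|_m)\to F_n$ is uniformly continuous, and since $E$ is dense in the Banach space $E_m$ while $F_n$ is complete, $T$ extends uniquely to a continuous linear $T_{m,n}\colon E_m\to F_n$, with uniqueness enforced by density. The converse direction is immediate: the existence of such $T_{m,n}$ yields $\|Tx\|_n\le \|T_{m,n}\|\,\|x\|_m$ on $E$, i.e.\ continuity of $T$ into $F_n$, and since $n$ is arbitrary, of $T$ into $F$.

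For part (2), I would unpack the definition of a bounded operator: $T$ is bounded iff some $0$-neighborhood $U\subset E$ has $T(U)$ bounded in $F$, equivalently bounded in every seminorm $\|\cdot\|_n$. Using the base $\{\varepsilon B_m\}$, this amounts to the existence of a single $m$ such that for each $n$, $\sup_{x\in B_m}\|Tx\|_n<\infty$. Coupling this with the density/completeness extension argument from (1) delivers the unique extensions $T_{m,n}\colon E_m\to F_n$ for every $n$, with the same $m$; the reverse implication reproduces the same estimates on $E$. The only real subtlety is recognising that the single $m$ in (2) is forced by the fact that boundedness refers to a single $0$-neighborhood, whereas continuity allows $m$ to vary with $n$; beyond this observation the argument reduces to the standard Banach-space principle that a bounded linear map defined on a dense subspace extends uniquely to the whole space, so I do not anticipate a genuine obstacle.
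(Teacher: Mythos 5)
Your argument is correct. The paper offers no proof of this lemma at all --- it is stated as well-known and referred to Lemma 25 of the cited reference --- and your proof is precisely the standard one: reduce continuity (resp.\ boundedness) of $T$ to estimates $\|Tx\|_n\le C\|x\|_m$ on $E$ with $m$ depending on $n$ (resp.\ a single $m$ valid for all $n$), using that the seminorms are directed thanks to the continuous inclusions $E_{m+1}\subset E_m$, and then invoke the unique extension of a densely defined bounded operator into the Banach space $F_n$. Nothing is missing.
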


\begin{proposition}
	Let $\kotz$ be a Köthe echelon space of order 0. Then, $\ces\colon\kotz \to \kotz$ is continuous if and only if for all $n \in \nn$ there exists $m>n$ such that
	
	\begin{equation}\label{continuity criterion}
		\left\{\frac{a_n(i)}{i} \sum_{j=1}^i \frac{1}{a_m(j)} \right\}_{i \in \nn} \in \ell_\infty.
	\end{equation}
\end{proposition}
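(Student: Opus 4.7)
The plan is to apply Lemma~\ref{proj lim cts} to $E=F=\kotz=\proj_m c_0(a_m)$. Its hypotheses are immediate: finitely supported sequences belong to $\kotz$ and are norm-dense in each $c_0(a_m)$, and (K1) gives $a_m\leq a_{m+1}$, whence $c_0(a_{m+1})\hookrightarrow c_0(a_m)$ continuously. Continuity of $\ces\colon\kotz\to\kotz$ is therefore equivalent to the existence, for each $n$, of some $m$ (which we may take $>n$) such that $\ces$ extends to a bounded linear map $\ces_{m,n}\colon c_0(a_m)\to c_0(a_n)$. Both implications will then come from estimating the norm of $\ces_{m,n}$.

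For sufficiency, assume~(\ref{continuity criterion}) and set $M_{n,m}:=\supi \frac{a_n(i)}{i}\sum_{j=1}^i \frac{1}{a_m(j)}<\infty$. On $c_0(a_m)$ the pointwise bound $|x_j|\leq p_m(x)/a_m(j)$ yields
\[
p_n(\ces x)=\supi a_n(i)\left|\frac{1}{i}\sum_{j=1}^i x_j\right|\leq p_m(x)\cdot M_{n,m},
\]
so $\ces_{m,n}$ is bounded. For necessity, I would test $\ces$ against the finitely supported vectors $x^{(i)}:=\sum_{j=1}^i a_m(j)^{-1}e_j\in\kotz$, which satisfy $p_m(x^{(i)})=1$. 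Evaluating $\ces x^{(i)}$ at coordinate $i$ gives the lower bound
\[
p_n(\ces x^{(i)})\geq a_n(i)\,(\ces x^{(i)})_i=\frac{a_n(i)}{i}\sum_{j=1}^i \frac{1}{a_m(j)},
\]
and if $\ces_{m,n}$ has operator norm $C$, then this is $\leq C$ for every $i$, forcing the sequence in~(\ref{continuity criterion}) to lie in $\ell_\infty$.

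I do not anticipate any real obstacle. The only points demanding care are the verification of the density hypothesis in Lemma~\ref{proj lim cts} and the observation that the test vectors $x^{(i)}$ actually live in $\kotz$ (they do, being finitely supported), so that the operator $\ces$ itself — rather than merely its extension — can be evaluated on them.
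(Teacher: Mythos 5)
Your overall strategy coincides with the paper's: reduce via Lemma~\ref{proj lim cts} to a statement about the extensions $c_0(a_m)\to c_0(a_n)$ (the paper simply cites \cite[Proposition 2.2(i)]{ABRwc0} for that Banach-space step, which you reconstruct explicitly). Your necessity direction is correct: the test vectors $x^{(i)}$ are finitely supported, hence in $\kotz$, have $p_m(x^{(i)})=1$, and the $i$-th coordinate of $\ces x^{(i)}$ produces exactly the quantity in~(\ref{continuity criterion}).

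The sufficiency direction has a gap. Your estimate shows $\supi a_n(i)|(\ces x)_i|\le M_{n,m}\,p_m(x)$, i.e.\ that $\ces$ maps $c_0(a_m)$ boundedly into $\ell_\infty(a_n)$; it does not show that $a_n(i)(\ces x)_i\to 0$, which is needed both for $\ces$ to map $\kotz$ into $\kotz$ at all and for the extension $\ces_{m,n}$ to take values in $F_n=c_0(a_n)$, as Lemma~\ref{proj lim cts} requires. The point is not vacuous at the level of a single pair of weights: for $a_n(i)=i$ and $a_m(i)=4^i$ one has $\frac{a_n(i)}{i}\sum_{j=1}^i a_m(j)^{-1}\le\frac13$ for all $i$, yet $\ces e_1=(1/i)_i$ satisfies $a_n(i)(\ces e_1)_i\equiv 1$, so $\ces e_1\notin c_0(a_n)$. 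To close the gap you must show $\limi a_n(i)/i=0$ for every $n$; then, splitting the Ces\`aro sum at the index $N$ beyond which $a_m(j)|x_j|<\varepsilon$, the tail is at most $\varepsilon M_{n,m}$ and the head is at most $C_N\,a_n(i)/i\to 0$. This does follow from the hypothesis, but only by invoking the criterion at two levels: if $a_n(i_k)/i_k\ge c>0$ along a subsequence, the criterion at level $n$ forces $\sum_{j}1/a_m(j)\le M_{n,m}/c<\infty$, while the criterion at level $m$ yields $m'$ with $a_m(i)\le M_{m,m'}\,a_{m'}(1)\,i$ and hence $\sum_j 1/a_m(j)=\infty$, a contradiction. (Under (G1-1), which holds wherever the proposition is applied later, $a_n(i)\le a_n(1)$ and the issue disappears; but the proposition is stated for arbitrary K\"othe echelon spaces of order $0$, and the cited \cite[Proposition 2.2(i)]{ABRwc0} is precisely where the paper outsources this verification.)
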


\begin{proof}
	Follows from Lemma~\ref{proj lim cts}, and \cite[Proposition 2.2(i)]{ABRwc0}.  
\end{proof}

\begin{corollary}
	Let $A$ be a Köthe matrix satisfying the condition (G1-1). Then, $\ces\colon\kotz \to \kotz$ is continuous.
\end{corollary}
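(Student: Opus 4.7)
The plan is to verify the $\ell_\infty$-condition \eqref{continuity criterion} from the previous proposition, and in fact to do so with any choice of $m > n$, for instance $m = n+1$. The heart of the argument is the monotonicity supplied by (G1-1): since $0 < a_m(j+1) \leq a_m(j)$ for all $j$, the reciprocal sequence $(1/a_m(j))_j$ is positive and non-decreasing, so its partial sums admit the crude but decisive estimate
\[
\sum_{j=1}^{i} \frac{1}{a_m(j)} \;\leq\; i \cdot \frac{1}{a_m(i)}.
\]

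I would then combine this with the Köthe condition (K1), which gives $a_n(i) \leq a_m(i)$ whenever $m \geq n$. Substituting both ingredients into the expression appearing in \eqref{continuity criterion} yields
\[
\frac{a_n(i)}{i} \sum_{j=1}^{i} \frac{1}{a_m(j)} \;\leq\; \frac{a_n(i)}{i} \cdot \frac{i}{a_m(i)} \;=\; \frac{a_n(i)}{a_m(i)} \;\leq\; 1,
\]
uniformly in $i \in \nn$. Hence the sequence in \eqref{continuity criterion} belongs to $\ell_\infty$ (with $\ell_\infty$-norm at most $1$), and the previous proposition applies to give continuity of $\ces$ on $\kotz$.

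There is no real obstacle in this argument; the only point requiring care is getting the direction of the monotonicity right, namely that (G1-1) is a \emph{decreasing} condition on the rows, which translates into an \emph{increasing} condition on the reciprocals and therefore bounds the partial sum by the last (largest) term times the number of terms. Note also that (G1-2), which is the stronger structural assumption in the definition of a $G_1$-space, plays no role here: mere monotonicity of each row, together with the standard Köthe comparability between rows, suffices to make $\ces$ continuous.
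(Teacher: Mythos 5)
Your proof is correct and is essentially identical to the paper's: both use (G1-1) to bound the partial sum $\sum_{j=1}^{i} 1/a_m(j)$ by $i/a_m(i)$ and then (K1) to conclude $a_n(i)/a_m(i) \leq 1$, verifying condition \eqref{continuity criterion}. No further comment is needed.
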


\begin{proof}
	Since $a_m(i+1) \leq a_m(i)$, for all $i \in \nn$, we find $m>n$ such that
	\[
		\frac{a_n(i)}{i} \sum_{j=1}^i \frac{1}{a_m(j)} \leq \frac{a_n(i)}{i} \frac{i}{a_m(i)} = \frac{a_n(i)}{a_m(i)} \leq 1, \quad \forall i \in \nn.
	\]
	Hence $\ces$ is continuous on $\kotz$. 
\end{proof}
The following proposition is a direct consequence of \cite[Proposition 2.2.(ii)]{ABRwc0} and Lemma~\ref{proj lim cts}.
\begin{proposition}
	Let $A$ be a Köthe matrix satisfying condition (G1-1). Then the Cesàro operator $\ces\colon\kotz \to \kotz$ is compact if and only there exists $m \in \nn$ such that for all $n$
	\begin{equation}\label{compactness criterion}
		\left\{\frac{a_n(i)}{i}\sum_{j=1}^i \frac{1}{a_m(j)}\right\}_{i \in \nn} \in c_0
	\end{equation}
	is satisfied.
\end{proposition}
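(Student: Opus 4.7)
The proof reduces compactness of the Fréchet operator $\ces\colon\lambda_0(A)\to\lambda_0(A)$ to compactness of its Banach-space extensions $c_0(a_m)\to c_0(a_n)$, for which \cite[Proposition 2.2(ii)]{ABRwc0} supplies exactly the sequence criterion \eqref{compactness criterion}. The bridge is Lemma \ref{proj lim cts}(2) together with the topological fact that $\lambda_0(A)=\proj_n c_0(a_n)$ embeds diagonally as a closed subspace of $\prod_n c_0(a_n)$, so a subset of $\lambda_0(A)$ is relatively compact if and only if it is relatively compact in each $c_0(a_n)$; one also uses that the finitely supported sequences are dense in each $c_0(a_m)$, whence $\lambda_0(A)$ is dense in every $c_0(a_m)$ and the set $U_m:=\{x\in\lambda_0(A):p_m(x)\le 1\}$ is dense in the closed unit ball of $c_0(a_m)$.

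For the forward implication, I assume $\ces$ is compact and pick $m$ with $\ces(U_m)$ relatively compact in $\lambda_0(A)$. Since compact implies bounded, Lemma \ref{proj lim cts}(2) yields, for every $n$, a unique continuous extension $\widetilde{\ces}_{m,n}\colon c_0(a_m)\to c_0(a_n)$. By the density just noted, the image of the closed unit ball of $c_0(a_m)$ under $\widetilde{\ces}_{m,n}$ lies in the closure of $\ces(U_m)$ inside $c_0(a_n)$, and that closure is compact by the projective-limit identification of relatively compact sets. Hence $\widetilde{\ces}_{m,n}$ is a compact Banach operator, and \cite[Proposition 2.2(ii)]{ABRwc0} delivers \eqref{compactness criterion}.

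The converse reverses each step: from \eqref{compactness criterion} and \cite[Proposition 2.2(ii)]{ABRwc0}, every $\widetilde{\ces}_{m,n}$ is a compact Banach operator, so the $c_0(a_m)$-unit ball is mapped to a set relatively compact in each $c_0(a_n)$; the projective-limit identification upgrades this to relative compactness in $\lambda_0(A)$. Since $U_m$ sits inside the unit ball of $c_0(a_m)$, the image $\ces(U_m)$ is relatively compact in $\lambda_0(A)$, so $\ces$ is compact. The main difficulty is precisely this Fréchet-to-Banach transfer of compactness: Lemma \ref{proj lim cts} only supplies continuity of the extensions, and compactness must be bootstrapped from the description of $\lambda_0(A)$ as a closed subspace of $\prod_n c_0(a_n)$ together with the density of $U_m$ in the $c_0(a_m)$-unit ball.
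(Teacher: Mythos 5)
Your argument is correct and follows the same route as the paper, which simply declares the result a direct consequence of \cite[Proposition 2.2(ii)]{ABRwc0} and Lemma~\ref{proj lim cts}; you merely make explicit the Fréchet-to-Banach transfer of compactness (density of $U_m$ in the unit ball of $c_0(a_m)$ and the identification of relatively compact subsets of $\proj_n c_0(a_n)$ with sets relatively compact in each step) that the paper leaves to the cited lemma.
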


Let $D\colon\cnn \to \cnn$ be the formal operator of differentiation $D(x_1, x_2, x_3, \dots) = (x_2, 2x_3, 3x_4, \dots)$, $x=(x_i)_i$.
\begin{proposition}
	 Let $A$ be a Köthe matrix satisfying condition (G1-1). Then, the following statements are equivalent:
	 \begin{enumerate}[label=\normalfont(\arabic*)]
	 	\item $D\colon\kotz \to \kotz$ is continuous.
	 	\item For all $n \in \nn$, there exist $m>n$ and $M>0$ such that
	 	\begin{equation}
	 		ia_n(i) \leq M a_m(i+1), \quad \forall i \in \nn.
	 	\end{equation}
	 \end{enumerate}
\end{proposition}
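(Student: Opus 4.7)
The plan is to reduce the continuity of $D$ to the boundedness of each of its natural Banach space extensions via Lemma~\ref{proj lim cts}(1), and then to read off the operator norm of $D$ viewed as a weighted backward shift. Writing $\kotz = \proj_n c_0(a_n)$, the finite support vectors are contained in $\kotz$ and are dense in each $c_0(a_n)$ (condition (G1-1) gives $a_n(i)>0$, so the spaces $c_0(a_n)$ are the standard weighted $c_0$ spaces in which the canonical basis is a Schauder basis). By Lemma~\ref{proj lim cts}(1), $D\colon\kotz\to\kotz$ is continuous iff for every $n$ there exists $m>n$ such that $D$ admits a continuous linear extension $D_{m,n}\colon c_0(a_m) \to c_0(a_n)$.

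For the implication $(2) \Rightarrow (1)$, I would use the formula $(Dx)_i = i x_{i+1}$ together with the assumed bound to compute, for $x \in c_0(a_m)$,
\[
p_n(Dx) \;=\; \supi a_n(i)\, i\, |x_{i+1}| \;=\; \supi \frac{i\,a_n(i)}{a_m(i+1)}\, a_m(i+1)|x_{i+1}| \;\leq\; M\, p_m(x),
\]
so that $D$ extends to a bounded operator $c_0(a_m)\to c_0(a_n)$ with norm at most $M$.

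For the converse $(1) \Rightarrow (2)$, I would test on the canonical basis vectors. Fix $n$, and let $m>n$ and $M>0$ be such that $p_n(Dx) \leq M\, p_m(x)$ for every $x$ in the dense subspace of finitely supported sequences. A direct computation from $(Dx)_j = j x_{j+1}$ shows $D e_{i+1} = i e_i$. Evaluating both seminorms gives $p_n(De_{i+1}) = i a_n(i)$ and $p_m(e_{i+1}) = a_m(i+1)$, whence $i a_n(i) \leq M a_m(i+1)$ for every $i\in\nn$. There is no substantive obstacle; the only care needed is to verify that the extension $D_{m,n}$ really does coincide with $D$ on the basis vectors, which is automatic from density of finite support sequences in $c_0(a_m)$.
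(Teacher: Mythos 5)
Your proposal is correct and takes essentially the same route as the paper's proof: sufficiency is the same direct seminorm estimate $a_n(i)|(Dx)_i| = i a_n(i)|x_{i+1}| \leq M a_m(i+1)|x_{i+1}| \leq M p_m(x)$, and necessity is obtained by testing the continuity inequality on the canonical basis vectors, exactly as in the paper (which writes the resulting bound as $(j-1)a_n(j-1)\leq M a_m(j)$ for $j\geq 2$, i.e.\ your inequality with $i=j-1$). The only difference is that you route the continuity characterization through Lemma~\ref{proj lim cts}(1) while the paper uses the seminorm inequality for Fréchet-space continuity directly; this is cosmetic.
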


\begin{proof}
	(1) $\Rightarrow$ (2) Assume that $D$ is continuous on $\kotz$. Then for all $n \in \nn$, there exist $m>n$ and $M > 0$ such that
	\[
		\supi  a_n(i)|(Dx)_i| \leq M \supi  a_m(i) |x_i|, \quad \forall x \in \kotz.
	\]
	Letting $x=(e_j)$, the canonical basis in $\cnn$ for $j\geq 2$ we obtain
	\[
		(j-1)a_n(j-1) \leq M a_m(j), \quad \forall j \geq 2,
	\]
	which is precisely (2).
	
	(2) $\Rightarrow$ (1) Given $n$, pick $m$ and $M>0$ as in condition (2) and for $x \in \kotz$ we have
	\begin{align*}
		a_n(i)|(Dx)_i|= i a_n(i)|x_{i+1}| \leq & M a_m(i+1)|x_{i+1}| \\
										  \leq & M p_m(x),
	\end{align*}
	which implies that $D$ is continuous on $\kotz$. 
\end{proof}

\begin{example}
\normalfont
	Consider the nuclear $G_1$-space $X$ constructed in Example~\ref{g1_non_pss}. Picking $m=2n$ yields
	\begin{align*}
		\frac{ia_n(i)}{a_m(i+1)} = \frac{i\exp(-ne^{i/n})}{\exp(-2ne^{(i+1)/2n})} & = \exp(\log(i)+2ne^{1/2n}e^{i/2n}-ne^{i/n})\\
		& = \exp\left(\frac{\log(i)}{ne^{i/n}}+2e^{1/2n}e^{-i/2n}-1\right)ne^{i/n} \igoes 0.
	\end{align*}
	Hence $D\colon X \to X$ is continuous.
\end{example}

A Köthe echelon space $\kotz$ of order 0 is called \textit{regular} \cite{Dra70} if
\[
	\frac{a_n(i)}{a_{n+1}(i)} \geq \frac{a_n(i+1)}{a_{n+1}(i+1)}, \quad \forall i,n \in \nn.
\]

\begin{proposition}\label{nuclearity_criterion1}
	Let $\kotz$ be a $G_1$-space. Then the following statements are equivalent:
	\begin{enumerate}[label=\normalfont(\arabic*)]
		\item $\kotz$ is nuclear.
		\item For all $n \in \nn$, there exists $m>n$ such that
		\begin{equation}\label{nuclearity with i}
			\supi  \frac{ia_n(i)}{a_m(i)} < \infty.
		\end{equation}
		\item Given $\alpha \in \rn$, for all $n \in \nn$, there exists $m>n$ such that
		\begin{equation}\label{nuclearity with alpha}
			\supi  \frac{i^\alpha a_n(i)}{a_m(i)} < \infty.
		\end{equation}
	\end{enumerate}
\end{proposition}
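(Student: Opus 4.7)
The plan is to establish the cycle $(3)\Rightarrow(2)\Rightarrow(1)\Rightarrow(3)$. The first implication is trivial (take $\alpha=1$ in (3)). For $(2)\Rightarrow(1)$, the idea is to iterate (2) to boost the pointwise $1/i$-decay of the ratio to $1/i^2$-decay. Given $n\in\nn$, condition (2) produces $m>n$ and $M_1>0$ with $a_n(i)\leq (M_1/i)\,a_m(i)$; reapplying (2) to $m$ yields $k>m$ and $M_2>0$ with $a_m(i)\leq (M_2/i)\,a_k(i)$. Composition gives $a_n(i)/a_k(i)\leq M_1M_2/i^2$, hence $\sum_i a_n(i)/a_k(i)<\infty$, and the Grothendieck--Pietsch criterion (\cite[Theorem~28.15]{Vog97}) yields nuclearity of $\kotz$. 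Condition (G1-2) is not used in this step.

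The hard direction is $(1)\Rightarrow(3)$, which I would break into three ingredients. First, I claim that nuclearity together with (G1-1) forces $\sum_{i=1}^\infty a_m(i)<\infty$ for every $m\in\nn$: Grothendieck--Pietsch produces $p>m$ with $\sum_i a_m(i)/a_p(i)<\infty$, and since $a_p(i)\leq a_p(1)$ by (G1-1), one has $a_m(i)\leq a_p(1)\cdot a_m(i)/a_p(i)$, hence the series converges. Second, since $(a_m(i))_i$ is positive and decreasing, the tail estimate $\lfloor i/2\rfloor\,a_m(i)\leq\sum_{j=\lceil i/2\rceil+1}^{i}a_m(j)$, whose right-hand side is a tail of the convergent series, shows that $i\,a_m(i)\to 0$; in particular, there exists $M>0$ (depending on $m$) with $a_m(i)\leq M/i$ for all $i\in\nn$.

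Third, I apply (G1-2) iteratively. Given $n\in\nn$ and $\alpha>0$ (the case $\alpha\leq 0$ being immediate from (K1)), choose $k$ with $2^k-1\geq\alpha$ and apply (G1-2) inductively $k$ times to produce an index $m_k>n$ and a constant $K>0$ with $a_n(i)\leq K\,a_{m_k}(i)^{2^k}$ for all $i$. Combining with the bound $a_{m_k}(i)\leq M/i$ from the second step yields
\[
	\frac{i^\alpha\,a_n(i)}{a_{m_k}(i)} \leq K\,i^\alpha\,a_{m_k}(i)^{2^k-1} \leq K M^{2^k-1}\,i^{\alpha-(2^k-1)},
\]
which is uniformly bounded in $i$ by the choice of $k$, establishing (3). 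The main obstacle in the entire proof is precisely this $(1)\Rightarrow(3)$ step: nuclearity supplies only averaged (summability) information, while (3) demands a pointwise polynomial bound. Overcoming this requires two separate uses of the $G_1$-structure — the monotonicity (G1-1) to convert summability into the pointwise estimate $a_m(i)=O(1/i)$, and the quadratic bound (G1-2), iterated, to amplify this to any prescribed polynomial order.
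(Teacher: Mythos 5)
Your proof is correct, and the hard direction $(1)\Rightarrow(3)$ takes a genuinely different route from the paper. The paper proves $(1)\Rightarrow(2)$ by invoking Robinson's theorem to replace $A$ with an equivalent \emph{regular} Köthe matrix $B$, so that the ratios $b_n(i)/b_m(i)$ are decreasing in $i$; it then applies the classical fact that a decreasing summable sequence is $o(1/i)$ to these ratios, and transfers the estimate back to $A$. You avoid the regularization machinery entirely: you first observe that nuclearity together with the boundedness $a_p(i)\leq a_p(1)$ from (G1-1) already forces $\sum_i a_m(i)<\infty$, then apply the same classical tail estimate to the sequence $(a_m(i))_i$ itself --- which is decreasing by (G1-1), so no regularity of the matrix is needed --- to get $a_m(i)=O(1/i)$, and finally iterate (G1-2) to amplify this pointwise decay into the ratio bound for arbitrary $\alpha$. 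Both arguments rest on the same elementary Abel--Olivier lemma, but yours is self-contained where the paper leans on an external structure theorem; the price is that you consume (G1-2) inside the hard implication (iterating it $k$ times with $2^k-1\geq\alpha$), whereas the paper isolates the polynomial amplification in the (G1-2)-free step $(2)\Rightarrow(3)$ by iterating condition (2) directly. Your $(2)\Rightarrow(1)$ (iterate (2) twice to get $1/i^2$ decay and apply Grothendieck--Pietsch) matches the paper's $(3)\Rightarrow(1)$ in substance.
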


\begin{proof}
	(1) $\Rightarrow$ (2) Let $\kotz$ be a nuclear $G_1$-space. Without loss of generality assume that $a_n(i)=1$ if $n \geq i$ and also $a_n(i) \leq a_{n+1}(i)^2$, for all $i,n$. Define $b_n(i):=1$ if $i<n$ and $b_n(i):=\prod_{j=n}^i a_j(i)$ if $i\geq n$. By \cite[Theorem 2]{Rob78} $\lambda_0(B)$ is regular and $\kotz=\lambda_0(B)$ both algebraically and topologically, that is, one has
	\begin{equation}\label{A equals B}
		\forall n \quad \exists m,\, C>0:\quad a_n(i) \leq Cb_m(i), \quad \forall i \in \nn, 
	\end{equation}
	and
	\begin{equation}\label{B equals A}
		\forall n \quad \exists m,\,D>0:\quad b_n(i) \leq Da_m(i),\quad \forall i \in \nn.
	\end{equation}
	Since $\lambda_0(B)$ is nuclear, fix $n \in \nn$, and pick $m>n$ such that $\sumi \frac{b_n(i)}{b_m(i)}<\infty$ by Grothendieck-Pietsch criterion. By regularity, $(\frac{b_n(i)}{b_m(i)})_i$ is decreasing. Then, by \cite[Theorem 3.3.1]{Kno56} for all $n \in \nn$ we can find $m>n$ such that $\limi \frac{ib_n(i)}{b_m(i)} = 0$, which implies
\begin{equation}\label{B satisfies}
	\supi  \frac{ib_n(i)}{b_m(i)} < \infty.
\end{equation}
	Now consider $a_n(i)$ for a fixed $n \in \nn$. By~(\ref{A equals B}) and~(\ref{B satisfies}), there exist $\tilde{m}$ and $M>0$ such that
\[
a_n(i) \leq C b_m(i) \leq CM\frac{b_{\tilde{m}}(i)}{i}, \quad \forall i \in \nn,
\]
Then by~(\ref{B equals A}) find $\tilde{n}>\tilde{m}$ such that
\[
\frac{b_{\tilde{m}}(i)}{i} \leq D \frac{a_{\tilde{n}}(i)}{i}, \quad \forall i \in \nn.
\]
Combining the arguments, one has
\[
	\supi  \frac{ia_n(i)}{a_{\tilde{n}}(i)} \leq CMD <\infty.
\]
	
	(2) $\Rightarrow$ (3) If $\alpha \leq 1$, it is trivial. Let $\alpha>1$. For any $n \in \nn$ we find $m_1 > n$ and $M_1>0$ such that $i a_n(i) \leq M_1 a_{m_1}(i)$, for all $i \in \nn$. Then one may find $m_2 > m_1$ and $M_2>0$ such that $i^2 a_n(i) \leq M_1M_2 a_{m_2}(i)$, for all $i \in \nn$. Continuing up to $k=\lfloor\alpha\rfloor+1$ one has $i^k a_n(i) \leq M_1 \dots M_k a_{m_{k}}(i)$, for all $i \in \nn$ so we obtain (3).
	
	(3) $\Rightarrow$ (1) Take $\alpha=2$. Given $n \in \nn$ apply (3) to find $m>n$ and $M>0$ with $i^2a_n(i) \leq Ma_m(i)$, for all $i \in \nn$. Then $\frac{a_n(i)}{a_m(i)}\leq \frac{M}{i^2}$, for all $i \in \nn$ and $(\frac{a_n	}{a_m}) \in \ell_1$. The space $\kotz$ is nuclear by the Grothendieck-Pietsch criterion. 
\end{proof}

\section{Spectrum of $\ces$ on $\kotz$}
For a locally convex Hausdorff space $X$ and $T \in \clo(X)$, the \textit{resolvent set} $\rho(X;T)$ of $T$ consists of all $\lambda \in \cn$ such that $(\lambda I-T)^{-1}$ exists in $\clo(X)$. The set $\sigma(T;X):=\cn\setminus\rho(T;X)$ is called the \textit{spectrum} of $T$ on $X$. The \textit{point spectrum} $\sigma_{pt}(T;X)$ of $T$ on $X$ consists of all $\lambda \in \cn$ such that $(\lambda I-T)$ is not injective. We denote $\Sigma:=\{\frac{1}{k}:k \in \nn\}$ and $\Sigma_0:=\Sigma \cup \{0\}$.

The inverse Cesàro operator $\ices\colon\cnn \to \cnn$ is given by the explicit formula
	\begin{equation}
		y=(y_i)_{i \in \nn} \mapsto \ices(y) = (iy_i-(i-1)y_{i-1})_{i \in \nn}, \quad y_0:=0.
	\end{equation}

\begin{proposition}\label{invertibility}
	Let $A$ be a Köthe matrix satisfying (G1-1). Then, the following statements are equivalent:
	 \begin{enumerate}[label=\normalfont(\arabic*)]
	 	\item $0 \notin \sigma(\ces;\kotz)$.
	 	\item For any $n \in \nn$, there exists $m>n$ such that
	\[
		\supi  \frac{ia_n(i)}{a_m(i)} < \infty.
	\]
\end{enumerate}
\end{proposition}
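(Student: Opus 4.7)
The key observation is that the formal inverse $\ices$ already exists as a bijection on $\cnn$, and $\ces\colon\kotz\to\kotz$ is continuous by the previous Corollary. Since $\kotz$ is a Fréchet space, the open mapping theorem tells us that $0 \notin \sigma(\ces;\kotz)$ is equivalent to $\ces$ being a bijection on $\kotz$, which in turn is equivalent to $\ices(\kotz)\subseteq\kotz$. So the whole statement reduces to: $\ices$ preserves $\kotz$ if and only if the growth condition (2) holds.

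\noindent\textbf{(1)\,$\Rightarrow$\,(2).} Assume $0 \notin \sigma(\ces;\kotz)$. Then $\ces^{-1}=\ices\colon\kotz\to\kotz$ is continuous, so for every $n\in\nn$ there exist $m>n$ and $C>0$ with $p_n(\ices y)\leq C p_m(y)$ for all $y\in\kotz$. I will test this inequality on the canonical basis vectors $e_j$. A direct computation from the formula for $\ices$ gives $\ices e_j = j e_j - j e_{j+1}$, so
\[
p_n(\ices e_j) = \max\bigl(j\,a_n(j),\, j\,a_n(j+1)\bigr) = j\,a_n(j),
\]
using (G1-1). Since $p_m(e_j)=a_m(j)$, the continuity inequality becomes $j\,a_n(j)\leq C\,a_m(j)$ for all $j$, which is exactly (2).

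\noindent\textbf{(2)\,$\Rightarrow$\,(1).} Assume (2). Since $\ces$ is always injective on $\cnn$, it is injective on $\kotz$. To get surjectivity I show that $x:=\ices y\in\kotz$ whenever $y\in\kotz$. Fix $n\in\nn$ and pick $m>n$ and $M>0$ from (2). Then
\[
a_n(i)|x_i| \leq i\,a_n(i)|y_i| + (i-1)\,a_n(i)|y_{i-1}| \leq M\,a_m(i)|y_i| + M\,a_m(i-1)|y_{i-1}|,
\]
where in the second term I used $a_n(i)\leq a_n(i-1)$ from (G1-1) together with the bound from (2) applied at index $i-1$. Both summands tend to $0$ as $i\to\infty$ because $y\in\kotz$. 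Hence $x\in\kotz$, so $\ces$ is surjective and thus bijective on $\kotz$. The open mapping theorem now yields continuity of $\ces^{-1}$, giving $0\notin\sigma(\ces;\kotz)$.

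\noindent\textbf{Main obstacle.} The only delicate point is the shifted-index bound $(i-1)a_n(i)|y_{i-1}|$ in the surjectivity argument; without (G1-1) one cannot pass from $a_n(i)$ to $a_n(i-1)$ and apply condition (2) at the shifted index, so the monotonicity hypothesis (G1-1) is genuinely used in both directions. Everything else is a straightforward combination of the open mapping theorem with the explicit description of $\ices$ and testing on the canonical basis.
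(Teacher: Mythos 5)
Your proof is correct and follows essentially the same route as the paper's: both directions rest on testing the continuity of $\ices$ on the canonical basis vectors (giving $ja_n(j)\leq Ma_m(j)$) and on the triangle-inequality estimate $a_n(i)|(\ices y)_i|\leq ia_n(i)|y_i|+(i-1)a_n(i-1)|y_{i-1}|$ combined with (G1-1) and condition (2). The only cosmetic difference is that you route the argument through the open mapping theorem, whereas the paper verifies the continuity estimate for $\ices$ directly; your version is, if anything, slightly more careful about why membership of $\ices y$ in $\kotz$ (rather than a mere sup bound) holds.
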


\begin{proof}
		(2) $\Rightarrow$ (1) For given $n \in \nn$ pick $m>n$ as in \eqref{nuclearity with i}. Then we have
	\begin{align*}
		a_n(i)|\ices(y)| \leq & a_n(i)|iy_i-(i-1)y_{i-1}|
					 \leq  ia_n(i)|y_i| + (i-1)a_n(i)|y_{i-1}| \\
					 \leq & ia_n(i)|y_i|+(i-1)a_n(i-1)|y_{i-1}| \\
					 =	  & \frac{ia_n(i)}{a_m(i)}a_m(i)|y_i|+\frac{(i-1)a_n(i-1)}{a_m(i-1)}a_m(i-1)|y_{i-1}|,
	\end{align*}
	for some $M>0$. Then
	\[
		\supi  a_n(i) |\ices y| \leq  M a_m(i)|y_i|+Ma_m(i-1)|y_{i-1}| \leq  2M\supi a_m(i)|y_i|.
	\]
	Therefore, $\ices$ is continuous.
	
	(1) $\Rightarrow$ (2) Let $\ices:\kotz \to \kotz$ be continuous. Then for all $n \in \nn$, there exists $m > n$ and $M>0$ with
	\[
		\supi  a_n(i)|iy_i-(i-1)y_{i-1}| \leq M \supi  a_m(i) |y_i|.
	\]
	Let $y=(e_j), j\geq 2$ to get $ja_n(j) \leq M a_m(j)$, for all $j \geq 2$, and hence
	\[
		\supi  \frac{ia_n(i)}{a_m(i)} \leq M < \infty. 
	\]
This completes the proof.
 
\end{proof}

\begin{lemma}\textup{\cite[Proposition 2.6]{ABRwc0}}\label{1/s in ptsp}
	The following statements are equivalent for the Cesàro operator $\ces$ defined on a Köthe echelon space of order zero $\kotz$ and $s \in \nn$.
	\begin{enumerate}[label=\normalfont(\arabic*)]
		\item $\frac{1}{s} \in \ptsp$.
		\item $\limi i^{s-1}a_n(i)=0$, for all $n\in \nn$.
	\end{enumerate}
\end{lemma}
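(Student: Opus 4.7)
The plan is to solve the eigenvalue equation $\ces x = (1/s)\,x$ explicitly and compare the resulting candidate eigenvectors against the defining decay condition of $\kotz$. Writing the equation componentwise as $s(x_1 + \cdots + x_i) = i\,x_i$ and subtracting the same equation at index $i-1$ produces the two-term recursion
\[
(i-s)\,x_i = (i-1)\,x_{i-1}, \qquad i \geq 2,
\]
together with the initial relation $(1-s)\,x_1 = 0$ coming from $i=1$.

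I would then analyze the recursion case by case. For $s = 1$ it reduces to $x_i = x_{i-1}$, so the eigenspace in $\cnn$ is the line of constant sequences. For $s \geq 2$ the initial relation forces $x_1 = 0$, and the recursion propagates $x_1 = x_2 = \cdots = x_{s-1} = 0$; at $i = s$ the coefficient $(i-s)$ vanishes, both sides are $0$, and $x_s$ becomes a free parameter; for $i > s$ the recursion iterates to
\[
x_i = \binom{i-1}{s-1}\, x_s, \qquad i \geq s.
\]
Since $\binom{i-1}{0} = 1$, this formula also covers $s = 1$. Thus in $\cnn$ the eigenspace of $\ces$ at $1/s$ is exactly one-dimensional, spanned by the vector $v^{(s)} := \bigl(\binom{i-1}{s-1}\bigr)_{i \in \nn}$, with the convention $\binom{i-1}{s-1} = 0$ when $i < s$.

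What remains is a purely analytic comparison. The coefficient $\binom{i-1}{s-1}$ is a polynomial in $i$ of degree $s-1$ with strictly positive leading coefficient $1/(s-1)!$, so for each $n \in \nn$ the condition $\limi a_n(i)\binom{i-1}{s-1} = 0$ is equivalent to $\limi i^{s-1} a_n(i) = 0$. This gives both implications at once. For (2) $\Rightarrow$ (1), the decay assumption places $v^{(s)}$ in $\kotz$ and realizes $1/s$ as an eigenvalue. For (1) $\Rightarrow$ (2), any nonzero eigenvector in $\kotz$ is, by the analysis above, a nonzero scalar multiple of $v^{(s)}$, so its membership in $\kotz$ forces $\limi i^{s-1} a_n(i) = 0$ for every $n$.

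There is no real obstacle; the only delicate point is the degenerate step $i = s$ of the recursion, where the vanishing coefficient is precisely what allows $1/s$ to be an eigenvalue at all, and which distinguishes the set $\Sigma = \{1/k : k \in \nn\}$ as the only candidate locus for the point spectrum.
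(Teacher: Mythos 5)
Your proposal is correct and is essentially the argument behind the cited result \cite[Proposition 2.6]{ABRwc0}, which the paper quotes without reproducing: solve $\ces x=\tfrac{1}{s}x$ in $\cnn$ to find the one-dimensional eigenspace spanned by $\bigl(\binom{i-1}{s-1}\bigr)_i$, then note that membership of this vector in $\kotz$ is equivalent to $\limi i^{s-1}a_n(i)=0$ for all $n$ since $\binom{i-1}{s-1}$ is a degree-$(s-1)$ polynomial in $i$ with positive leading coefficient. The recursion, the treatment of the degenerate index $i=s$, and the two-way comparison are all handled correctly.
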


\begin{proposition}\label{G11 supfinite}
	Let $A$ be a Köthe matrix satisfying the condition (G1-1). Then $\Sigma= \ptsp$ if for all $n \in \nn$, there exist $m>n$ and $M>0$ such that
	\[
		\supi  \frac{ia_n(i)}{a_m(i)} < \infty.
	\]
\end{proposition}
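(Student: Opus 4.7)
The plan is to prove the two inclusions $\ptsp \subseteq \Sigma$ and $\Sigma \subseteq \ptsp$ separately. The first is purely formal and requires no hypothesis, while the second is where the summability condition (equivalently, nuclearity, by Proposition~\ref{nuclearity_criterion1}) enters.

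First I would handle $\ptsp \subseteq \Sigma$ by examining the eigenvalue equation $\ces x = \lambda x$ componentwise in $\cnn$. It reads $x_1 + \cdots + x_i = i\lambda x_i$ for every $i$, which yields $(1-\lambda)x_1 = 0$ and, by differencing consecutive relations, the recursion $(1 - i\lambda)x_i = -(i-1)\lambda x_{i-1}$ for $i \geq 2$. If $\lambda = 0$ then $\ces x = 0$ forces $x = 0$, and if $\lambda \in \cn \setminus \Sigma_0$ then $\lambda \neq 1/i$ for every $i$, so $x_1 = 0$ propagates via the recursion to $x_i = 0$ for all $i$. Hence no eigenvector can exist outside of $\Sigma$.

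For the reverse inclusion $\Sigma \subseteq \ptsp$, I would appeal to Lemma~\ref{1/s in ptsp}, which reduces the claim $1/s \in \ptsp$ to showing $\limi i^{s-1}a_n(i) = 0$ for every $n \in \nn$. The hypothesis is precisely statement~(2) of Proposition~\ref{nuclearity_criterion1}, whose argument (2) $\Rightarrow$ (3) bootstraps this bound to: for every $s \in \nn$ and every $n$ there exist $m > n$ and $M > 0$ with $i^s a_n(i) \leq M a_m(i)$ for all $i$. Combining this with the bound $a_m(i) \leq a_m(1)$ coming from (G1-1) yields $i^{s-1}a_n(i) \leq M a_m(1)/i \to 0$, as required.

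I do not expect any serious obstacle, since the substantive step — the iterative upgrade from an $i$-weighted estimate to an $i^s$-weighted one — is already carried out in Proposition~\ref{nuclearity_criterion1}. The only minor point to check is that the exceptional value $\lambda = 0$ is excluded from the point spectrum, which is immediate from the defining equation, and that the case $\lambda = 1 = 1/1$ is handled on equal footing with $\lambda = 1/s$ for $s \geq 2$ in both directions of the argument.
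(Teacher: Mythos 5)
Your proof is correct and takes essentially the same route as the paper: $\ptsp\subseteq\Sigma$ via the eigenvalue recursion in $\cnn$ (the paper simply cites \cite[Lemma 2.5(i)]{ABRwc0} for this), and $\Sigma\subseteq\ptsp$ by iterating the hypothesis to obtain $i^{s}a_n(i)\leq M a_m(i)$ and then applying Lemma~\ref{1/s in ptsp}, which is exactly the paper's induction repackaged as the (2)$\Rightarrow$(3) step of Proposition~\ref{nuclearity_criterion1}. The only point to flag is that your parenthetical equating the hypothesis with nuclearity requires (G1-2), which is not assumed here; this is harmless, since the (2)$\Rightarrow$(3) implication you actually invoke does not use (G1-2).
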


\begin{proof}
It is clear that $\ptsp \subseteq \sigma_{pt}(\ces;\cnn)=\Sigma$ \cite[Lemma 2.5(i)]{ABRwc0}. For the other inclusion, we prove that $i^sa_n(i)\igoes0$, for all $s,n \in \nn$ by induction on $s$.  Observe that $0 < ia_n(i) \leq M a_m(i) \igoes 0$ and hence $ia_n(i) \igoes 0$, for all $n \in \nn$. Let now $i^sa_n(i) \igoes 0$ for some $s$, and consider
\[
	i^{s+1}a_n(i)=i^s i a_n(i) \leq M i^sa_m(i) \igoes 0,
\]
by the induction hypothesis. That implies $i^{s+1}a_n(i) \igoes 0$, for all $s \in \nn$ and for all $n \in \nn$, which means $(i^s)_i \in \kotz$, for all $s \in \nn$, and hence by Lemma~\ref{1/s in ptsp}, $\frac{1}{s} \in \ptsp$ for all $s\in \nn$. So $\Sigma \subseteq \ptsp$. 
\end{proof}

\begin{theorem}\label{spectrum main theorem}
	For a $G_1$-space $\kotz$, the following statements are equivalent:
	\begin{enumerate}[label=\normalfont(\arabic*)]
		\item $0 \notin \spec$.
		\item $\frac{1}{2} \in \ptsp$.
		\item There exists $s \in \nn$, $s>1$ such that $\frac{1}{s} \in \ptsp$.
		\item $\Sigma=\ptsp$.
	\end{enumerate}
\end{theorem}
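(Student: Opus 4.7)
The plan is to establish the cycle $(1) \Rightarrow (4) \Rightarrow (2) \Rightarrow (3) \Rightarrow (1)$, where three of the four implications are essentially free from the material already developed; only the last one requires the $G_1$-structure in a substantive way.

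First, for $(1) \Rightarrow (4)$, I would invoke Proposition~\ref{invertibility}: assuming $0 \notin \spec$ gives that for every $n$ there exists $m > n$ with $\sup_i \frac{ia_n(i)}{a_m(i)} < \infty$. This is precisely the hypothesis of Proposition~\ref{G11 supfinite}, which yields $\Sigma = \ptsp$. The implications $(4) \Rightarrow (2)$ and $(2) \Rightarrow (3)$ are immediate, as $\frac{1}{2} \in \Sigma$ and $s = 2$ works.

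The heart of the argument is $(3) \Rightarrow (1)$. Assume $\frac{1}{s} \in \ptsp$ for some integer $s > 1$, equivalently $s \geq 2$. By Lemma~\ref{1/s in ptsp}, this translates to
\[
\limi i^{s-1} a_n(i) = 0, \quad \forall n \in \nn.
\]
Since $s - 1 \geq 1$, we have $i \, a_n(i) \leq i^{s-1} a_n(i)$ for all $i \in \nn$, so in particular $\limi i\, a_n(i) = 0$, and a fortiori $\supi i\, a_n(i) < \infty$, for every $n$. Now I would feed this into condition (G1-2): given $n \in \nn$, choose $m > n$ and $C > 0$ such that $a_n(i) \leq C a_m(i)^2$ for all $i$. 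Then
\[
\frac{i\, a_n(i)}{a_m(i)} \leq C \, i\, a_m(i),
\]
whose supremum is finite by the preceding observation applied to the index $m$. This is exactly the criterion (2) of Proposition~\ref{invertibility}, giving $0 \notin \spec$.

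I do not expect a genuine obstacle here; the argument is short once one spots that (G1-2) allows one to trade a factor of $a_m$ for $a_n/a_m$, turning the hypothesis about $i^{s-1}a_n(i)$ into the desired boundedness of $ia_n(i)/a_m(i)$. The only small subtlety is the indexing: the $m$ provided by (G1-2) is the one to use in Proposition~\ref{invertibility}, and the bound $i \leq i^{s-1}$ requires $s \geq 2$, which is precisely the strict inequality $s > 1$ in statement~(3).
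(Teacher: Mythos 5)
Your proof is correct. The cycle $(1)\Rightarrow(4)\Rightarrow(2)\Rightarrow(3)\Rightarrow(1)$ closes, and every step checks out: $(1)\Rightarrow(4)$ is exactly the concatenation of Proposition~\ref{invertibility} with Proposition~\ref{G11 supfinite}, whose hypotheses match verbatim; $(4)\Rightarrow(2)\Rightarrow(3)$ is trivial; and in $(3)\Rightarrow(1)$ the bound $i\,a_n(i)\le i^{s-1}a_n(i)$ for $s\ge 2$ together with Lemma~\ref{1/s in ptsp} and the $(G1\text{-}2)$ trade $\frac{i\,a_n(i)}{a_m(i)}\le C\,i\,a_m(i)$ is precisely what is needed for criterion (2) of Proposition~\ref{invertibility}.

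The route differs from the paper's in one organizational respect. The paper works hub-and-spoke around (2): it proves $(1)\Leftrightarrow(2)$, notes $(2)\Leftrightarrow(3)$, and then establishes $(3)\Rightarrow(4)$ by a fresh induction showing $\limi i^{2^k}a_n(i)=0$ for all $k$, doubling the exponent at each step via $(G1\text{-}2)$. Its $(2)\Rightarrow(1)$ step is mathematically identical to your $(3)\Rightarrow(1)$: both use $(G1\text{-}2)$ to convert $i\,a_m(i)\igoes 0$ into boundedness of $\frac{i\,a_n(i)}{a_m(i)}$. Where you diverge is in reaching (4): instead of redoing an induction, you route through $(1)$ and invoke Proposition~\ref{G11 supfinite}, which already contains the (linear, one-power-at-a-time) induction needed to get all of $\Sigma$ into $\ptsp$. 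This makes your argument shorter and reuses earlier material, at the cost of making (4) depend on the full invertibility criterion rather than only on the pointwise decay $\limi i\,a_n(i)=0$; the paper's $2^k$-induction shows that $(G1\text{-}2)$ plus the single condition $(i)_i\in\lambda_0(A)$ already forces $(i^t)_i\in\lambda_0(A)$ for every $t$, which is a slightly more self-contained observation. Both are valid; nothing is missing from yours.
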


\begin{proof}
	(1) $\Rightarrow$ (2) By Proposition~\ref{invertibility}, for each $n \in \nn$ find $m>n$ and $M>0$ such that $ia_n(i)\leq Ma_m(i)\igoes0$. Hence $ia_n(i) \igoes 0$, for all $n \in \nn$, which implies $\frac{1}{2} \in \ptsp$ by Lemma~\ref{1/s in ptsp}.
	
	(2) $\Rightarrow$ (1) Fix $n \in \nn$. By (G1-2) find $m>n$ and $M>0$ such that $a_n(i) \leq M a_m(i)^2$, for all $i \in \nn$. Then, by Lemma~\ref{1/s in ptsp}
	\[
		\frac{ia_n(i)}{a_m(i)} \leq M i a_m(i) \igoes 0.
	\]
	Thus (1) follows by Propositions~\ref{nuclearity_criterion1} and~\ref{invertibility}.
	
	(2) $\Leftrightarrow$ (3) Clear.
	
	(3) $\Rightarrow$ (4) By Lemma~\ref{1/s in ptsp}, $\frac{1}{s} \in \ptsp$ if and only if $(i^{s-1}) \in \kotz$. Again by Lemma~\ref{1/s in ptsp}, $1 \in \ptsp$ since $(1,1,\dots) \in \kotz$. Assume there is $s \in \nn$ with $(i^s)_i \in \kotz$. Then $(i)_i \in \kotz$. We claim $\limi i^{2^k}a_n(i)=0$, for all $k,n \in \nn$. We prove it by induction on $k$ and all $n\in \nn$. For $k=0$, we have $\lim_i ia_n(i)=0$, for all $n \in \nn$. Suppose that $i^{2^r}a_n(i) \igoes 0$ for some $r \in \nn$, fix $n \in \nn$. By (G1-2) select $m>n$ and $M>0$ such that $a_n(i) \leq M a_m(i)^2$, for all $i \in \nn$. By the induction hypothesis $i^{2^r}a_m(i) \igoes 0$. Notice that $(i^{2^r}a_m(i))^2=i^{2^{r+1}}a_m(i)^2 \igoes 0$. Then
	\[
		0 < i^{2^{r+1}}a_n(i) \leq M i^{2^{r+1}} a_m(i)^2 \igoes 0.
	\]	
	Clearly $\lim_i i^{2^k}a_n(i)=0$, for all $k,n \in \nn$ implies $\limi i^t a_n(i)$, for all $t \in \nn$. Hence $\frac{1}{t} \in \ptsp$, for all $t \in \nn$.
	
	(4) $\Rightarrow$ (3) Trivial. 
	\end{proof}
	The following examples show that the assumption that $\kotz$ is a $G_1$-space in Theorem~\ref{spectrum main theorem} cannot be removed. We start with the following remark.
\begin{remark}\label{the phi for nong1}
\normalfont The function $\varphi(x)=x^\alpha e^{-x}$, $x \in (0,\infty)$, $\alpha>0$, is strictly decreasing on $(\alpha,\infty)$, has a local and global maximum at $x=\alpha$ and $\lim_{x \to 0^+}\varphi(x)=0$. In particular, if $0<\alpha<1$, then $\varphi$ is strictly decreasing on $[1,\infty)$.
\end{remark}

\begin{example}
\normalfont
\begin{enumerate}[wide, labelwidth=!, labelindent=0pt, label=(\textbf{\roman*})]
\setlength\itemsep{0.5em}	

\item Fix $0<\alpha<1$, take a strictly increasing sequence $(\alpha_n)_n \subset (0,\infty)$ tending to $\alpha$. Define the Köthe matrix $A=(a_n)_n$, $a_n(i)=i^{\alpha_n}e^{-i}$, $i,n \in \nn$. The Köthe echelon space $\kotz$ of order zero satisfies the condition (G1-1) and~(\ref{sequences vanish}) by Remark~\ref{the phi for nong1}. However, it is not a $G_1$-space. We first show that $0 \notin \spec$. Indeed, taking $n=1$, we get for each $m>1$,
\[
\supi  \frac{ia_1(i)}{a_m(i)}=\supi  i^{1-\alpha_m+\alpha_1}=\infty,
\]
since $1-\alpha_m+\alpha_1>0$. The condition follows from Proposition~\ref{invertibility}. On the other hand, for each $s,n\in \nn$, we have $0<i^{s-1}a_n(i)=i^{s+\alpha_n-1}e^{-i}\leq i^se^{-i}$, which tends to 0 as $i \to \infty$. Therefore $(i^{s-1})_i\in \kotz$ and $\frac{1}{s} \in \ptsp$ for each $s \in \nn$. This shows that condition (4) does not imply condition (1) in Theorem~\ref{spectrum main theorem} in general.

\item Fix $s \geq 1$, $s \in \nn$ and define the Köthe matrix $A=(a_n)_n$ by
\[
	a_n(i)=\frac{1}{i^{s-\frac{1}{2}+\frac{1}{n+1}}}, \quad i,n \in \nn.
\]
The Köthe echelon space $\kotz$ of order zero satisfies (G1-1) and~(\ref{sequences vanish}) but it is not a $G_1$-space. In this case $\ptsp=\{1,\frac{1}{2},\dots,\frac{1}{s}\}$, and condition (3) in Theorem~\ref{spectrum main theorem} does not imply condition (4). This is easy to show since $(i^{s-1})_i \in \kotz$, hence $(i^{m-1})_i \in \kotz$ for $m=1,2,\dots,s$ but $(i^s)_i\notin \kotz$, because $i^sa_1(i)=1$ for each $i \in \nn$. Thus $\frac{1}{s+1} \notin \ptsp$, which implies $\frac{1}{m} \notin \ptsp$ for each $m>s$, by Lemma~\ref{1/s in ptsp}. 
\end{enumerate}
\end{example}

The next lemma is well-known.

\begin{lemma}\label{spec_inclusion}
	Let $E$ be a Fréchet space, and let $T\colon E \to E$ be a continuous linear operator with the dual $T'\colon E' \to E'$. Then $\sigma_{pt}(T';E') \subset \sigma(T;E)$.
\end{lemma}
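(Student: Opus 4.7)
The plan is to prove the contrapositive: if $\lambda \in \rho(T;E)$, then $\lambda \notin \sigma_{pt}(T';E')$. By the definition of the resolvent set, there exists $S \in \clo(E)$ such that
\[
S(\lambda I - T) = (\lambda I - T)S = I_E.
\]
Since both $S$ and $\lambda I - T$ are continuous endomorphisms of $E$, their transposes are well-defined linear endomorphisms of $E'$ (for any $\varphi \in E'$ the composition $\varphi \circ S$ and $\varphi \circ (\lambda I - T)$ are again elements of $E'$).

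Next I would invoke the functorial identity $(AB)' = B'A'$ for transposes of continuous linear maps. Applying it to the two equalities above yields
\[
(\lambda I - T)' S' = S'(\lambda I - T)' = I_{E'},
\]
together with the identity $(\lambda I - T)' = \lambda I - T'$ on $E'$. Consequently $\lambda I - T'$ is an algebraic bijection on $E'$, and in particular injective.

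Injectivity of $\lambda I - T'$ means precisely that $\lambda \notin \sigma_{pt}(T';E')$, which is the desired contrapositive statement. There is no genuine obstacle here: no choice of topology on $E'$ needs to be fixed because the point spectrum only concerns the kernel of $\lambda I - T'$, and the argument is purely algebraic once the transposes are seen to map $E'$ into itself, which in turn is guaranteed by the continuity of $T$ (and of $S$).
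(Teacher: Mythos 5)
Your proof is correct, but it runs in the opposite direction from the paper's. You prove the contrapositive: starting from $\lambda\in\rho(T;E)$ you take the continuous two-sided inverse $S$ of $\lambda I-T$, transpose the identities $S(\lambda I-T)=(\lambda I-T)S=I_E$ via $(AB)'=B'A'$, and conclude that $\lambda I-T'=(\lambda I-T)'$ has the two-sided algebraic inverse $S'$ on $E'$, hence is injective. The paper instead argues directly: given a nonzero $u\in E'$ with $T'u=\lambda u$, it computes $\langle(\lambda I-T)x,u\rangle=\langle x,(\lambda I-T')u\rangle=0$ for all $x\in E$, so $u$ annihilates the range of $\lambda I-T$; since $u\neq 0$ is continuous, that range cannot be dense, so $\lambda I-T$ is not surjective and $\lambda\in\sigma(T;E)$. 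Both arguments are complete. The paper's version yields slightly more information (an eigenvalue of $T'$ forces $\lambda I-T$ to have non-dense range, not merely to fail invertibility), while yours is more functorial and uses only the injectivity half of the conclusion; note that for injectivity alone you only need the left inverse $S'(\lambda I-T)'=I_{E'}$, i.e.\ the transpose of $(\lambda I-T)S=I_E$. Your remark that no topology on $E'$ need be fixed is accurate, since the point spectrum is a purely algebraic (kernel) condition and $T'$, $S'$ map $E'$ into $E'$ by continuity of $T$ and $S$.
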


\begin{proof}
	Let $\lambda \in \sigma_{pt}(T';E')$. Then there exists a nonzero $u\in E'$ such that $T'u=\lambda u$. For $x \in E$,
	\[
	\langle (\lambda I-T)x,u \rangle =\langle x,(\lambda I-T')u \rangle = \langle x, \lambda u-T'u \rangle=0.
	\]
	If $(\lambda I-T)E$ is dense in $E$, then $u(z)=0$, for all $z \in E$. Contradiction. Therefore $(\lambda I-T)E$ is not dense in $E$, so $\lambda I-T$ is not surjective, and hence $\lambda \in \sigma(T;E)$. 
\end{proof}

\begin{proposition}\label{spectrum is sigma}
	Let $\kotz$ be a nuclear $G_1$-space. Then, $\Sigma = \spec$.
\end{proposition}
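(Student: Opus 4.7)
The plan has two parts: first invoke Theorem~\ref{spectrum main theorem} under nuclearity to identify $\Sigma$ with $\ptsp$, and then, for the reverse inclusion, construct the resolvent explicitly for $\lambda\notin\Sigma_0$.

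Because $\kotz$ is a nuclear $G_1$-space, Proposition~\ref{nuclearity_criterion1} yields $\sup_i \frac{i\,a_n(i)}{a_m(i)}<\infty$ for every $n$ and a suitable $m>n$, which is precisely the condition appearing in Proposition~\ref{invertibility}. Hence $0\notin\spec$, so assertion~(1) of Theorem~\ref{spectrum main theorem} holds, and that theorem delivers $\Sigma=\ptsp$. Combined with the trivial inclusion $\ptsp\subseteq\spec$, this settles $\Sigma\subseteq\spec$.

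For the reverse inclusion, I fix $\lambda\in\cn\setminus\Sigma_0$. Since $\lambda\notin\ptsp$, the operator $\lambda I-\ces\colon\kotz\to\kotz$ is injective, so by the open mapping theorem it suffices to prove surjectivity. Given $y\in\kotz$, solving $(\lambda I-\ces)x=y$ componentwise in $\cnn$ forces $x_1=y_1/(\lambda-1)$ and $(\lambda i-1)x_i=\lambda(i-1)x_{i-1}+iy_i-(i-1)y_{i-1}$ for $i\geq2$; no denominator vanishes because $\lambda\notin\Sigma_0$. Iterating and setting $\gamma_i:=\prod_{k=2}^{i}\frac{\lambda(k-1)}{\lambda k-1}$ (with $\gamma_1:=1$), one obtains the closed form
\[
x_i=\frac{\gamma_i\,y_1}{\lambda-1}+\gamma_i\sum_{j=2}^{i}\frac{jy_j-(j-1)y_{j-1}}{\gamma_j(\lambda j-1)},\qquad i\geq 2,
\]
and a short logarithmic computation gives the polynomial asymptotic $|\gamma_i|=O\bigl(i^{\mathrm{Re}(1/\lambda)-1}\bigr)$.

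The technical heart of the proof is then showing $x\in\kotz$ with a continuous bound of the form $p_n(x)\leq C_n\,p_{m'}(y)$. Fixing $n$, I would use Proposition~\ref{nuclearity_criterion1}(3) to select $m>n$ and $M>0$ with $i^\alpha a_n(i)\leq M\,a_m(i)$ for a polynomial order $\alpha$ large enough to absorb $|\gamma_i|$, and then a further $m'>m$ (again via Proposition~\ref{nuclearity_criterion1}(3)) to dominate $|y_j|/|\gamma_j|$ by $p_{m'}(y)/a_{m'}(j)$ up to a polynomial factor in $j$; the remaining factor $|\lambda j-1|^{-1}=O(1/j)$ and the ratio $|\gamma_i/\gamma_j|\lesssim(i/j)^{\mathrm{Re}(1/\lambda)-1}$ are folded into this estimate. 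The main obstacle I foresee is the bookkeeping in the double-indexed sum: one must pick the shifts $n<m<m'$ delicately so as to swallow simultaneously the polynomial blow-up of $|\gamma_i|$ and a partial sum of the shape $\sum_{j\leq i}1/(j\,a_{m'}(j))$, an analysis reminiscent of the continuity criterion~(\ref{continuity criterion}) but requiring the stronger polynomial domination supplied by nuclearity.
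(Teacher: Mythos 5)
Your proposal is correct, and for the hard inclusion it is in substance the paper's own computation in different clothing; but both halves are packaged differently, so a comparison is worthwhile. For $\Sigma\subseteq\spec$ the paper does not go through the point spectrum of $\ces$ at all: it exhibits, for each $\frac{1}{s}\in\Sigma$, a finitely supported eigenvector of the transposed operator $\ces'$ on $\kotz'$ and invokes Lemma~\ref{spec_inclusion}. Your shortcut --- nuclearity gives $0\notin\spec$ via Propositions~\ref{nuclearity_criterion1} and~\ref{invertibility}, hence $\Sigma=\ptsp\subseteq\spec$ by Theorem~\ref{spectrum main theorem} --- is legitimate and arguably cleaner here; what the dual-eigenvector argument buys is reusability in the non-nuclear setting (the later proposition on $D(1)\cup\{1\}\subseteq\spec$), where $\ptsp$ need not contain all of $\Sigma$. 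For $\spec\subseteq\Sigma$ the paper quotes Reade's explicit matrix for $(\ces-\lambda I)^{-1}$, splits it as $D_\lambda-\frac{1}{\lambda^2}E_\lambda$, and checks continuity of $E_\lambda$ between the Banach steps via the column/row criterion of \cite[Theorem 4.51-C]{Tay58}; you re-derive the same inverse by telescoping the recursion and outsource continuity of the inverse to the open mapping theorem, which is fine on a Fréchet space. Your $\gamma_i/\gamma_j$ equals $\frac{j}{i}\prod_{k=j+1}^{i}(1-\frac{1}{k\lambda})^{-1}$, i.e.\ Reade's product up to an elementary factor, your asymptotic $|\gamma_i|\asymp i^{\operatorname{Re}(1/\lambda)-1}$ is \cite[Lemma 7]{Rea85}, and the decisive input in both versions is Proposition~\ref{nuclearity_criterion1}(3) with exponent governed by $\operatorname{Re}(1/\lambda)$.

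One caution: the estimate you call the technical heart is only sketched, and it is exactly where the proof lives. It does close with the tools you name. Writing $\alpha:=\operatorname{Re}(1/\lambda)$ and using $|y_j|\leq p_{m'}(y)/a_{m'}(j)$, $|\lambda j-1|^{-1}=O(1/j)$ and $|\gamma_i/\gamma_j|\lesssim (i/j)^{\alpha-1}$, everything reduces to
\[
\supi \; i^{\alpha-1}a_n(i)\sum_{j=1}^{i}\frac{1}{j^{\alpha-1}a_{m'}(j)}<\infty
\]
(note the exponent $\alpha-1$ inside the sum, not the exponent $1$ in your ``partial sum of the shape $\sum_{j\leq i}1/(j\,a_{m'}(j))$''). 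By (G1-1) one pulls out $1/a_{m'}(i)$, and $\sum_{j\leq i}j^{1-\alpha}=O(i^{2-\alpha}+\log i)$ leaves a bound of the form $i^{\beta}a_n(i)/a_{m'}(i)$ with $\beta=\max(1,\alpha-1)+\varepsilon$, which Proposition~\ref{nuclearity_criterion1}(3) controls for an arbitrary real exponent; for $\alpha<1$ one needs no nuclearity at all, only $a_n(i)\leq a_{m'}(j)$ for $j\leq i$. So there is no gap of substance, but this case analysis must be displayed in a final write-up, exactly as the paper does for its $E_\lambda$.
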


\begin{proof}
	The dual of the Cesàro operator $\ces'\colon \kotz' \to \kotz'$ is given by the formula
	\begin{equation}
		\ces'y = \left( \sum_{j=i}^\infty \frac{y_j}{j} \right)_{i \in \nn},
	\end{equation}
	for $y=(y_i)_{i \in \nn} \in \kotz' = \bigcup_{n=1}^\infty \ell_1(\frac{1}{a_n}) =  \{x \in \cnn| \, \exists n \in \nn: \sumi  \frac{|x_i|}{a_n(i)} < \infty\}$.
\begin{enumerate}[wide, labelwidth=!, labelindent=0pt, label=(\textbf{\roman*})]
\setlength\itemsep{0.5em}
\item $0 \notin \spec$: Follows by Proposition~\ref{invertibility}.

\item $\Sigma \subset \sigma_{pt}(\ces';\kotz')$: Let $\lambda \in \cn\setminus\{0\}$. Then $\ces'y=\lambda y$ for $y \in \kotz'$ if and only if $\lambda y_i = \sum_{j=i}^\infty \frac{y_j}{j}$, for all $i \in \nn$. The latter yields $\lambda(y_i-y_{i+1})=\frac{y_i}{i}$, and hence 
		  \begin{equation}\label{recurrence_relation}
		  y_{i+1} =  \left(1-\frac{1}{\lambda i}\right) y_i \quad \Rightarrow \quad
		  y_i     =  y_1 \prod_{j=1}^{i-1} \left(1-\frac{1}{\lambda j}\right), \, \, i \in \nn. 
		  \end{equation}
Now let $\lambda \in \Sigma$, that is, $\lambda=\frac{1}{s}$, for some $s \in \nn$. Let us define $y \in \kotz'$ by
		  \begin{equation}
		  	y_i = y_1 \prod_{j=1}^{i-1} \left(1-\frac{s}{j}\right),
		  \end{equation}
for $1 < i \leq s$ and $y_i=0$ for $i>s$ so that $y$ satisfies~(\ref{recurrence_relation}). Therefore $y$ belongs to the space of finitely supported sequences $c_{00}$, thus clearly to the space $\kotz'$, and satisfies $\ces'y = \lambda y$. Hence $ \lambda \in \sigma_{pt}(\ces';\kotz')$.
		  
\item $\Sigma \subseteq \spec$: follows by Lemma~\ref{spec_inclusion}.
		   
\item $\spec \subseteq \Sigma$: We show $(\ces-\lambda I)^{-1}$ is continuous on $\kotz$ for every $\lambda \in \cn \setminus \Sigma_0$. The $i$-th row of the matrix $(\ces-\lambda I)^{-1}$ is given by \cite{Rea85}:
		   \[
		   	\begin{cases}
		   	\displaystyle	\frac{-1}{i \lambda^2 \prod_{k=j}^i (1-\frac{1}{k\lambda})} & \text{if } 1 \leq j <i \vspace{0.25cm}\\
		   	\displaystyle	\frac{1}{\frac{1}{i}-\lambda} & \text{if } i=j \vspace{0.25cm}\\
		   		0 & \text{otherwise}.
		   	\end{cases}
		   \]
 For $D_\lambda=(d_{ij})_{i,j}$ and $E_\lambda=(e_{ij})_{i,j}$, one may formulate $(\ces-\lambda I)^{-1}=D_\lambda-\frac{1}{\lambda^2}E_\lambda$, where
		  \begin{multicols}{2}
 \noindent
		   \[
		   	d_{ij}= \begin{cases}
		   			\displaystyle \frac{1}{\frac{1}{i}-\lambda} & \text{if } i=j \vspace{0.25cm} \\
		   			0 & \text{otherwise}.
		   	\end{cases}	
		   \]
\noindent
		   \[
		   	e_{ij}= \begin{cases}
		   	\displaystyle	\frac{1}{i\prod_{k=j}^i \left(1-\frac{1}{k\lambda}\right)} & \text{if } 2 \leq j <i \vspace{0.25cm} \\
		   		0 & \text{otherwise}.
		   	\end{cases}
		   \]
\end{multicols}
By assumption, $\lambda \notin \Sigma_0$. Then $D_\lambda \in \clo(\kotz)$. $E_\lambda\colon \cnn \to \cnn$ acts continuously on $\kotz$ if and only if given $\lambda \in \cn\setminus \Sigma_0$ for every $n \in \nn$ there exists $m>n$ such that there exists a unique continuous extension $E^{nm}_\lambda\colon c_0(a_m) \to c_0(a_n)$. $E^{nm}_\lambda$ is continuous on $c_0(a_m)$ if and only if $\tilde{E}^{nm}_\lambda= \phi_n \circ E^{nm}_\lambda \circ \phi_m^{-1}$ is continuous on $c_0$, where $\phi_n(x)=(a_n(i)x_i)_i$, and $\tilde{E}^{nm}_\lambda=(\tilde{e}^{nm}_{ij})_{i,j}$. To prove that, we need to verify for each $n \in \nn$ \cite[Theorem 4.51-C]{Tay58}:
		   		\begin{enumerate}[label=\normalfont(\arabic*)]
		   			\item $\limi \tilde{e}^{nm}_{ij}=0$, for each $j \in \nn$.
		   			\item $\supi  \sumj  |\tilde{e}^{nm}_{ij}|<\infty$.
		   		\end{enumerate}
		   
Given $\alpha:=\operatorname{Re}(\frac{1}{\lambda}) \in \rn$ select $m>n$ by the nuclearity criterion~(\ref{nuclearity with alpha}) such that
		   \[
		   	\limi \frac{i^\alpha a_n(i)}{a_m(i)} = 0.
		   \]
For part (1), let $j \in \nn$ be fixed. Use the estimate in \cite[Lemma 7]{Rea85} to see
		   \[
		   	|\tilde{e}^{nm}_{ij}| \leq C \frac{a_n(i)}{a_m(j)} \frac{1}{i^{1-\alpha}j^\alpha} = \frac{C}{a_m(j)j^\alpha}i^{\alpha-1}a_n(i) \leq \frac{C'}{a_m(j)j^\alpha}\frac{a_m(i)}{i} < \varepsilon, \quad \forall i>i_0, 
		   \] 
 for any given $\varepsilon>0$ and for some $i_0 \in \nn$ where $C,C'>0$. For part (2), consider
		   \[
		   	\sumj  |\tilde{e}^{nm}_{ij}|=\sum_{j=1}^{i-1} \frac{a_n(i)}{a_m(j)}|e_{ij}| \leq C \sum_{j=1}^{i-1} \frac{a_n(i)}{a_m(j)} \frac{1}{i^{1-\alpha}j^\alpha} = C \frac{1}{i}\sum_{j=1}^{i-1} \frac{a_n(i) i^\alpha}{a_m(j)j^\alpha},
		   \]
for some $C>0$. Now let $\alpha<1$. Observing $0<\frac{a_n(i)}{a_m(i)} \leq 1$, for all $i \in \nn$ and for all $m>n$, 
		   \[
		   	\sumj  |\tilde{e}^{nm}_{ij}| \leq \frac{C}{i^{1-\alpha}} \sum_{j=1}^{i-1} \frac{1}{j^\alpha} \leq C \max \left(1, \frac{1}{1-\alpha}\right) < \infty,
		   \]
where the last inequality is due to the proof of \cite[Corollary 3.6]{ABRwc0}. If $\alpha\geq 1$, then
		   \[
		   	\sumj  |\tilde{e}^{nm}_{ij}| \leq C \frac{i^\alpha}{i}a_n(i) \sum_{j=1}^{i-1}\frac{1}{a_m(j)j^\alpha} \leq C \frac{i^\alpha}{i}a_n(i)(i-1)\frac{1}{a_m(i)} \leq C \frac{i^\alpha a_n(i)}{a_m(i)} <\infty,
		   \]
Therefore, we deduce that $E_\lambda$ is continuous on $\kotz$ which implies that $(\ces-\lambda I)^{-1}$ is continuous for every $\lambda \in \cn \setminus \Sigma_0$. 
\end{enumerate}
\end{proof}

\begin{corollary}
	Let $\ces\colon \kotz \to \kotz$, where $\kotz$ is a nuclear $G_1$-space. Then $\ces$ is neither compact nor weakly compact.
\end{corollary}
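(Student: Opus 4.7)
The plan is to deduce the corollary directly from Proposition~\ref{spectrum is sigma} together with the Montel property of $\kotz$. By that proposition, $\spec = \Sigma$, so $0 \notin \spec$; in particular $\ces$ is a topological isomorphism of $\kotz$ onto itself with continuous inverse $\ices \in \clo(\kotz)$.

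For non-compactness, I would argue by contradiction. If $\ces$ were compact on $\kotz$, then there would exist a neighborhood $U$ of $0$ with $\ces(U)$ relatively compact in $\kotz$. Applying the continuous operator $\ices$ and using that continuous maps preserve relative compactness, one gets $U = \ices(\ces(U))$ relatively compact, so $\kotz$ would admit a relatively compact neighborhood of the origin, forcing $\kotz$ to be finite-dimensional. This contradicts the infinite-dimensionality of $\kotz$, which follows from (G1-1) and the assumption~(\ref{sequences vanish}) (each canonical unit vector $e_i$ lies in $\kotz$).

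For the absence of weak compactness, I would invoke that, as noted after Proposition~\ref{G1 Schwartz}, every $G_1$-space satisfying (\ref{sequences vanish}) is Schwartz and hence Montel. Suppose $\ces$ is weakly compact; then some neighborhood $U$ of $0$ satisfies that $\ces(U)$ is weakly relatively compact in $\kotz$. A weakly relatively compact set is weakly bounded, hence bounded by Mackey's theorem, hence relatively compact since $\kotz$ is Montel. Thus $\ces$ would be compact, contradicting the previous step.

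No step here is genuinely difficult, since all the substantive work has been done in Proposition~\ref{spectrum is sigma}. The only point that deserves care is the final reduction from weak compactness to compactness, which is exactly the place where the Montel (equivalently, Schwartz) property of $\kotz$ — guaranteed by the nuclearity hypothesis and~(\ref{sequences vanish}) — is used.
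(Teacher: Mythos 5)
Your proof is correct, but it takes a genuinely different route from the paper for the non-compactness part. The paper also first reduces weak compactness to compactness via the Montel property (exactly as you do at the end), but then argues as follows: if $\ces$ were compact, the spectrum $\spec$ would have to be a compact subset of $\cn$ by a cited theorem of Edwards on compact operators in locally convex spaces; since Proposition~\ref{spectrum is sigma} gives $\spec=\Sigma=\{\frac{1}{k}:k\in\nn\}$, which is not closed (it omits its limit point $0$), this is a contradiction. You instead use only the fact that $0\notin\spec$, so that $\ices\in\clo(\kotz)$, and observe that a compact operator with continuous inverse would force $\kotz$ to have a compact neighborhood of the origin, hence to be finite-dimensional by Riesz's theorem, contradicting the fact that all unit vectors $e_i$ lie in $\kotz$. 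Your argument is more elementary and self-contained: it needs only the invertibility of $\ces$ (Proposition~\ref{invertibility} together with the nuclearity criterion), not the full determination of the spectrum, and it avoids the external spectral theorem for compact operators on locally convex spaces. The paper's argument is shorter to state once $\spec=\Sigma$ and the cited theorem are in hand, and it makes visible the spectral obstruction (non-closedness of $\Sigma$ at $0$) that is really driving the result. Both arguments use the Montel property in the same way and at the same point.
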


\begin{proof}
	Since $\kotz$ is a $G_1$-space with $a_n(i) \igoes 0$, for all $n \in \nn$, $\kotz$ is a Schwartz space. In particular, it is a Montel space. So there is no distinction between compactness and weak compactness in $\kotz$. Now let $\ces$ be compact. Then, by \cite[Theorem 9.10.2(4)]{Edw65}, $\spec$ is necessarily a compact set. However, that contradicts Proposition~\ref{spectrum is sigma}. 
\end{proof}

For the Köthe matrix $A=(a_n)_n$, define
\begin{equation}\label{sn definition}
	S_n(A):=\left\{s \in \rn: \sumi  \frac{1}{i^s a_n(i)} < \infty \right\}, \quad n \in \nn.
\end{equation}
It follows from~(\ref{sn definition}) and (K1) that $S_n(A) \subseteq S_{n+1}(A)$, for all $n \in \nn$. If $S_{n_0}(A) \neq \varnothing$ for some $n_0 \in \nn$, then $S_n(A) \neq \varnothing$ for all $n \geq n_0$. However, as shown in Example~\ref{Sn counterexample}, $S_{n_0}(A) \neq \varnothing$ does not always imply that $S_n(A) \neq \varnothing$ for all $n<n_0$.

\begin{example}\label{Sn counterexample}
\normalfont	Let $a_n(i)=\exp(-e^{\alpha_i/n})$, where $\alpha_i=2\log(\log(i+2))$. Assume $S_1(A) \neq \varnothing$, then there exists $s \in \rn$ and $M>0$ such that $\frac{\exp(e^{\alpha_i})}{i^s} \leq M$, for all $i \in \nn$. Then,
	\begin{align*}
		e^{2\log(\log(i+2))}  & \leq \log(M)+ s \log(i+2) \\
		(\log(i+2))^2 & \leq \log(M)+ s \log(i+2) \\
		\underbrace{\log(i+2)}_{\igoes\infty} & \leq \underbrace{\frac{\log(M)}{\log(i+2)}}_{\igoes0} +\, s,
	\end{align*}
	for all $i \in \nn$. This is obviously a contradiction, and hence $S_1(A)=\varnothing$. However, for $s=2+\varepsilon$,
	\[
		\frac{1}{i^s a_2(i)}=\frac{\exp(e^{\alpha_i/2})}{i^{2+\varepsilon}} = \frac{\exp(\log(i+2))}{i^{2+\varepsilon}} = \frac{i+2}{i^{2+\varepsilon}} \simeq \frac{1}{i^{1+\varepsilon}} \in \ell_1,
	\]
	for any given $\varepsilon>0$. Then,
	\[
		\sumi  \frac{1}{i^{2+\varepsilon}a_2(i)} < \infty,
	\]
	which implies $S_2(A)\neq \varnothing$. 
\end{example}

For some $n_0 \in \nn$, let $S_{n_0}\neq \varnothing$. We define $s_0(n):=\inf S_n(A)$. We have $s_0(n+1) \leq s_0(n)$, for all $n\geq n_0$. Moreover, since $a_n(i) \downarrow 0$, there is $i_0 \in \nn$ such that
\[
	\sum_{i=i_0}^\infty \frac{1}{i^t a_n(i)} \geq \sum_{i=i_0}^\infty \frac{1}{i^t},
\]
for all $t \in S_n(A)$. This also shows that, if $A$ is a (G1-1) Köthe matrix with $S_n(A) \neq \varnothing$, then $s_0(n) \geq 1$.

\begin{lemma}\label{Sn reduces to limit}
	For a fixed $n \in \nn$ and a Köthe matrix $A$, the following statements are equivalent:
	\begin{enumerate}[label=\normalfont(\arabic*)]
	\setlength\itemsep{0.5em}
		\item $S_n(A) \neq \varnothing$.
		\item There exists $t>1$ such that $[t,\infty) \subset S_n(A)$.
		\item There exists $t>1$ such that for all $\displaystyle s \in [t,\infty), \,\limi \frac{1}{i^s a_n(i)}=0$.
		\item There exists $t>1$ such that for all $\displaystyle s \in [t,\infty),\, \limi \frac{1}{i^s a_n(i)}<\infty$.
	\end{enumerate}
\end{lemma}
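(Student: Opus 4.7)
The plan is to prove the cyclic chain of implications $(1) \Rightarrow (2) \Rightarrow (3) \Rightarrow (4) \Rightarrow (1)$. Three of the four implications amount to standard manipulations: tail-behaviour of convergent series, and a monotonicity comparison for the series in $s$. The one genuinely useful observation is that we have one free parameter, so whenever we need summability we can simply boost the exponent by a safety margin greater than $1$.

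For $(1) \Rightarrow (2)$ I would pick any $s_0 \in S_n(A)$ and set $t:=\max(s_0,2)>1$. For every $s \geq t$ and every $i \in \nn$, $i^{s-s_0} \geq 1$, so
\[
\frac{1}{i^s a_n(i)} \leq \frac{1}{i^{s_0} a_n(i)},
\]
and direct comparison gives $[t,\infty) \subseteq S_n(A)$. The implication $(2) \Rightarrow (3)$ is then immediate from the fact that a convergent series of positive terms has terms tending to $0$, applied separately for each $s \in [t,\infty)$. And $(3) \Rightarrow (4)$ is trivial, since $0 < \infty$.

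The only implication carrying any content is $(4) \Rightarrow (1)$. Here I use the margin trick: pick the $t>1$ from (4); the assumption that $\lim_i \frac{1}{i^t a_n(i)}$ exists and is finite tells us that the sequence $\bigl(\tfrac{1}{i^t a_n(i)}\bigr)_i$ is eventually bounded, so there exist $M>0$ and $i_0 \in \nn$ such that $\frac{1}{i^t a_n(i)} \leq M$ for all $i \geq i_0$. Setting $s:=t+2$, I obtain
\[
\frac{1}{i^s a_n(i)} \;=\; \frac{1}{i^{2}}\cdot \frac{1}{i^t a_n(i)} \;\leq\; \frac{M}{i^{2}}, \qquad i \geq i_0,
\]
and the right-hand side is summable. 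Therefore $s \in S_n(A)$, which proves $S_n(A) \neq \varnothing$.

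I do not expect a real obstacle in this argument; the only point where one must be slightly careful is in $(4) \Rightarrow (1)$, where one must remember to choose a buffer strictly greater than $1$ when passing from a bound on $\frac{1}{i^t a_n(i)}$ to a summable majorant — choosing exponent $t+2$ (or indeed any $t+r$ with $r>1$) does the job.
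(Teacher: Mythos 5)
Your proof is correct and follows essentially the same route as the paper: the first three implications are routine comparisons, and the only substantive step, $(4)\Rightarrow(1)$, is handled exactly as in the paper by boosting the exponent from $t$ to $t+2$ so that the bounded factor $\frac{1}{i^t a_n(i)}$ is multiplied by the summable $\frac{1}{i^2}$.
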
 

\begin{proof}
	(1) $\Rightarrow$ (2) $\Rightarrow$ (3) $\Rightarrow$ (4) is trivial.
	
	(4) $\Rightarrow$ (1) Fix a real $t>1$ such that for all $s \geq t$ we have $\supi \frac{1}{i^sa_n(i)}<\infty$. Then, if $s>t+2$ it follows that
	\[
		\sumi \frac{1}{i^sa_n(i)} \leq \sumi \frac{1}{i^{t+2}a_n(i)} = \sumi \frac{1}{i^2 \, i^t a_n(i)} \leq M \sumi  \frac{1}{i^2} < \infty,
	\]
	since there exists $M>0$ such that $\frac{1}{i^ta_n(i)} \leq M$, for all $i \in \nn$. This implies $(t+2,\infty)\subset S_n(A)$. 
\end{proof}

\begin{lemma}\label{equivalences on Sn}
	Let $A$ be a Köthe matrix with $a_n(i):=\exp(-f(\alpha_i/n))$, where $\alpha=(\alpha_i)_i \uparrow \infty$, and let $f$ be a strictly increasing real function. Then, for a fixed $n \in \nn$, the following statements are equivalent:
	\begin{enumerate}[label=\normalfont(\arabic*)]
	\setlength\itemsep{0.5em}
		\item $S_n(A) \neq \varnothing.$	
		\item There exist $t>1$ and $M>0$ such that $\displaystyle \exp(f(\alpha_i/n)) \leq M i^t.$
		\item $\displaystyle \supi  \frac{f(\alpha_i/n)}{\log(i)} < \infty.$
		\item $\displaystyle \supi  \frac{\alpha_i}{f^{-1}(\log(i))} < \infty.$
	\end{enumerate}
\end{lemma}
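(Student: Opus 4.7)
The plan is to prove the chain (1) $\Leftrightarrow$ (2) $\Leftrightarrow$ (3) $\Leftrightarrow$ (4), each step being an elementary manipulation of $a_n(i) = \exp(-f(\alpha_i/n))$, using nothing beyond strict monotonicity of $f$ and the previous lemma.

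For (1) $\Leftrightarrow$ (2), I would substitute $a_n(i) = \exp(-f(\alpha_i/n))$ into the criterion of Lemma \ref{Sn reduces to limit}: $S_n(A) \neq \varnothing$ is equivalent to the existence of $t>1$ and $M>0$ with $\sup_i 1/(i^t a_n(i)) \leq M$, which, after substitution, reads $\exp(f(\alpha_i/n)) \leq M i^t$, precisely (2).

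For (2) $\Leftrightarrow$ (3), I would take logarithms. The inequality $\exp(f(\alpha_i/n)) \leq M i^t$ becomes $f(\alpha_i/n) \leq \log M + t \log i$, which gives the bound in (3) for $i \geq 2$ (the term $i=1$ being a single finite value). Conversely, if $\sup_i f(\alpha_i/n)/\log i = K < \infty$ (for $i \geq 2$), exponentiation yields $\exp(f(\alpha_i/n)) \leq i^K$; choosing $t = \max(K,2) > 1$ and enlarging $M$ to absorb the value at $i=1$ completes the return to (2).

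For (3) $\Leftrightarrow$ (4), strict monotonicity of $f$, hence of $f^{-1}$, permits inverting the inequalities: $f(\alpha_i/n) \leq K \log i$ is equivalent to $\alpha_i \leq n\,f^{-1}(K \log i)$, and the passage to the form $\alpha_i \leq L\,f^{-1}(\log i)$ appearing in (4) is then a matter of absorbing the scalar $K$ inside $f^{-1}$ and the factor $n$ into the overall bound. The reverse direction simply applies $f$ to both sides of $\alpha_i \leq L f^{-1}(\log i)$. I expect the delicate point to be precisely this final equivalence: moving a multiplicative constant across $f^{-1}$ requires a mild comparability property of the form $f^{-1}(K y) \leq C_K f^{-1}(y)$, which is immediate for the $f$'s relevant to the paper (polynomial, exponential, iterated exponential growth) but is the only place where more than bare strict monotonicity is genuinely used. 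I would make this regularity explicit as the key observation separating (3) and (4).
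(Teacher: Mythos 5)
Your chain (1)$\Leftrightarrow$(2)$\Leftrightarrow$(3)$\Leftrightarrow$(4) is exactly the route the paper takes: (1)$\Leftrightarrow$(2) by substituting into Lemma~\ref{Sn reduces to limit}, and (2)$\Leftrightarrow$(3) by taking logarithms; both of these steps are correct as you present them (the paper is no more careful than you are about the $i=1$ term). The interesting point is your reservation about (3)$\Leftrightarrow$(4): you are right that this is the one place where more than strict monotonicity of $f$ is used, and in fact the paper's own proof glosses over it --- it passes from $\alpha_i \le C f^{-1}(\log i)$ to $f(\alpha_i/n)\le \tfrac{C}{n}\log i$ as though $f$ commuted with multiplication by scalars. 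For a general strictly increasing $f$ the equivalence can genuinely fail: if $f^{-1}(Ky)/f^{-1}(y)$ is unbounded in $y$ (take $f$ growing like $\log\log$, so $f^{-1}(y)\approx e^{e^{y}}$, and $\alpha_i = e^{i^2}$), then (3) holds while (4) does not. Your proposed doubling-type condition $f^{-1}(Ky)\le C_K f^{-1}(y)$ is precisely the missing hypothesis, and it is satisfied by the logarithmically convex $f$ with $2f(x)\le f(kx)$ of Remark~\ref{more G1}, i.e.\ by the Dragilev-type weights the lemma is aimed at. So: same approach as the paper, and the one gap you flag is not a defect of your write-up but a genuine gap in the statement and proof as given, which should be repaired by adding the regularity assumption you identify.
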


\begin{proof}
	(1) $\Leftrightarrow$ (2) Follows by Lemma~\ref{Sn reduces to limit}.

	(2) $\Rightarrow$ (3) Since $f(\alpha_i/n) \leq \log(Mi^t)$. Then for $\tilde{M}:=M^{1/t}$ we have,
	\[
		\frac{f(\alpha_i/n)}{\log(i)} \leq t \frac{\log(\tilde{M}i)}{\log(i)}= t \left(\frac{\log(\tilde{M})}{\log(i)}+1 \right) \igoes t < \infty.
	\]
	(3) $\Rightarrow$ (2) Since there exists $M>0$ such that $f(\alpha_i/n) \leq M \log(i) = \log(i)^M$, for all $i \in \nn$, we have $\exp(f(\alpha_i/n)) \leq C i^M$, for $C>0$ which implies (2).
	
	(3) $\Leftrightarrow$ (4) $\sup_i \frac{\alpha_i}{f^{-1}(\log(i))} < \infty$ if and only if there exists $C \in \nn$, $C>0$ such that $\alpha_i \leq \frac{C}{n} n f^{-1}(\log(i))$, for all $i \in \nn$ if and only if $f(\alpha_i/n) \leq \frac{C}{n}\log(i)$, for all $i \in \nn$ if and only if $\sup_i \frac{f(\alpha_i/n)}{\log(i)}<\infty$.  
\end{proof}

\begin{remark}\label{more G1}
\normalfont
Let $f$ defined in Lemma~\ref{equivalences on Sn} be an odd and logarithmically convex in addition. Then the Köthe echelon space $\kotz$ of order 0 for which $A$ is as in Lemma~\ref{equivalences on Sn} is a $G_1$-space if there exists $k \in \nn$ such that $2f(x) \leq f(kx)$ for all $x \geq 1$. In this case, $\kotz=:L_f(\alpha_i,0)$ is called a \textit{Dragilev space of finite type}.
\end{remark}

\begin{lemma}\label{lemma on Sn}
	Let $\kotz$ be a $G_1$-space. If $S_{n_0}(A) \neq \varnothing$, for some $n_0 \in \nn$, then $\kotz$ is not nuclear.
\end{lemma}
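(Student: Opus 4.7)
The plan is to argue by contradiction, leveraging the third characterization of nuclearity in Proposition~\ref{nuclearity_criterion1} together with the standing assumption \eqref{sequences vanish} that $\limi a_m(i)=0$ for all $m$, which holds for every $G_1$-space under consideration.

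First I would unpack the hypothesis $S_{n_0}(A)\neq \varnothing$: by definition there exists some $s\in\rn$ (in fact $s\geq 1$, as observed in the paragraph before Lemma~\ref{Sn reduces to limit}) such that $\sumi \frac{1}{i^s a_{n_0}(i)} < \infty$. Since the general term of a convergent series of positive numbers must tend to zero, this forces
\[
	\limi \frac{1}{i^s a_{n_0}(i)} = 0, \qquad \text{equivalently}, \qquad \limi i^s a_{n_0}(i) = +\infty.
\]

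Next, suppose for contradiction that $\kotz$ is nuclear. Apply Proposition~\ref{nuclearity_criterion1}(3) with this same exponent $\alpha:=s$ and with $n:=n_0$: there exist $m>n_0$ and $M>0$ such that
\[
	i^s a_{n_0}(i) \leq M\, a_m(i), \qquad \forall i \in \nn.
\]
Now, since $\kotz$ is a $G_1$-space, condition \eqref{sequences vanish} ensures $\limi a_m(i) = 0$, so the right-hand side tends to $0$. Combined with the previous step, this contradicts the fact that the left-hand side tends to $+\infty$. Hence $\kotz$ cannot be nuclear.

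There is no real obstacle here; the proof is essentially a one-line matching of two consequences (summability forces $i^s a_{n_0}(i)\to\infty$, while nuclearity together with the $G_1$ vanishing forces $i^s a_{n_0}(i)\to 0$). The only point to be careful about is invoking the full strength of Proposition~\ref{nuclearity_criterion1}(3) for an arbitrary real $\alpha$ rather than just $\alpha=1$, and recalling that the assumption \eqref{sequences vanish} is in force for every $G_1$-space treated in the paper.
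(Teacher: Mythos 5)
Your proof is correct and follows essentially the same route as the paper: both argue by contradiction, applying Proposition~\ref{nuclearity_criterion1}(3) with the exponent witnessing $S_{n_0}(A)\neq\varnothing$ to get $i^{s}a_{n_0}(i)\leq M a_m(i)$. The only cosmetic difference is that you derive the contradiction at the level of the general term ($i^{s}a_{n_0}(i)$ must tend both to $\infty$ and to $0$), whereas the paper compares the series $\sumi \frac{1}{i^{s}a_{n_0}(i)}\geq \frac{1}{M}\sumi\frac{1}{a_m(i)}=\infty$; both steps are valid.
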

\begin{proof}
If $S_{n_0}(A) \neq \varnothing$, there exists $t \in \rn$ such that 
	\begin{equation}\label{sn assumption}
		\sumi  \frac{1}{i^t a_{n_0}(i)}<\infty.
	\end{equation} 
	Suppose that $\kotz$ is nuclear. Then, by Proposition~\ref{nuclearity_criterion1} we find $m>n_0$ and $M>0$ such that
	$\frac{i^ta_{n_0}(i)}{a_m(i)}<M$. Hence
	\[
		\sumi  \frac{1}{i^ta_{n_0}(i)} \geq \frac{1}{M} \sumi  \frac{1}{a_m(i)} \xrightarrow{} \infty,
	\]
	which contradicts~(\ref{sn assumption}). So $\kotz$ is not nuclear. 
\end{proof}

\begin{lemma}\label{G1 arbitrary}
	Let $\kotz$ be a $G_1$-space. Then, for each $n \in \nn$ and for all $k \in \nn$ there exists $m>n$ and $M>0$ such that
	\[
		\frac{1}{a_m(i)^k} \leq M \frac{1}{a_n(i)}, \quad \forall i\in \nn.
	\]
\end{lemma}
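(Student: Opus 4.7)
The plan is to induct on $k$. The base case $k=1$ is immediate with $m=n$ and $M=1$. For the inductive step, I would apply (G1-2) once to bring in a square, then apply the induction hypothesis to raise the exponent further, and finally use (G1-1) to trim any surplus in the exponent into the multiplicative constant.

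More precisely, assume the statement holds for exponent $k$. Given $n \in \nn$, first invoke (G1-2) to obtain $m_1 > n$ and $C_1 > 0$ with $a_n(i) \leq C_1\, a_{m_1}(i)^2$ for all $i \in \nn$. Next, apply the induction hypothesis to the index $m_1$ and the exponent $k$, producing $m > m_1$ and $M_0 > 0$ such that $a_{m_1}(i) \leq M_0\, a_m(i)^k$ for all $i \in \nn$. Squaring the latter and chaining the estimates gives
\[
a_n(i) \leq C_1\, a_{m_1}(i)^2 \leq C_1 M_0^2\, a_m(i)^{2k}, \quad \forall i \in \nn.
\]
Since $k \geq 1$, the exponent $2k$ is at least $k+1$, so I need to shed the extra $k-1$ powers. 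Here (G1-1) enters: $a_m$ is decreasing, hence $a_m(i) \leq a_m(1)$ for every $i$. Therefore
\[
a_m(i)^{2k} = a_m(i)^{k+1}\, a_m(i)^{k-1} \leq \bigl(\max\{1, a_m(1)\}\bigr)^{k-1} a_m(i)^{k+1},
\]
and setting $M := C_1 M_0^2 \bigl(\max\{1, a_m(1)\}\bigr)^{k-1}$ yields $a_n(i) \leq M\, a_m(i)^{k+1}$ for all $i \in \nn$, which is the inductive step (in the form $1/a_m(i)^{k+1} \leq M/a_n(i)$).

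The only mild subtlety — and the reason (G1-1) must be used alongside (G1-2) — is that (G1-2) doubles the exponent rather than increasing it by one, so a naive induction overshoots; the surplus factor $a_m(i)^{k-1}$ has to be absorbed into the constant via the pointwise bound $a_m(i) \leq a_m(1)$. Apart from this bookkeeping, the argument is a direct iteration of (G1-2).
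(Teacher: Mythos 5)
Your argument is correct, and it reaches the conclusion by a slightly different route than the paper. The paper proves the estimate first for exponents of the form $k=2^s$ by induction on $s$ (letting (G1-2) double the exponent at each step), restricts the final inequality to $i\ge i_0$ where $1/a_n(i)>1$, and leaves implicit the reduction from a general $k$ to some $2^s\ge k$ together with the absorption of the finitely many remaining indices into the constant. You instead induct on $k$ itself, and at each step you immediately trim the overshoot $a_m(i)^{k-1}$ into the multiplicative constant via the pointwise bound $a_m(i)\le a_m(1)$ coming from (G1-1). The payoff of your version is that the inequality is obtained for all $i\in\nn$ at once, with no $i_0$ and no separate reduction step; the cost is a constant that grows with $k$, which is harmless since $k$ is fixed. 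The only blemish is the base case: the lemma asks for $m>n$, so $m=n$, $M=1$ does not literally qualify; take $m=n+1$ and use (K1), $a_n(i)\le a_{n+1}(i)$, to get $1/a_{n+1}(i)\le 1/a_n(i)$. With that one-line fix the proof is complete.
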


\begin{proof}
	Let us prove the assertion for $k=2^s$ by induction over $s$. First observe that (G1-1) implies that there exists $i_0 \in \nn$ such that for any $n \in \nn$, $\frac{1}{a_n(i)}>1$, for all $i \geq i_0$. For $s=1$, for each $n \in \nn$ we find $m>n$ and $M>0$ with
	\[
		\frac{1}{a_m(i)^2} \leq M \frac{1}{a_n(i)}, \quad \forall i \in \nn,
	\]
	by condition (G1-2). Assume that $\frac{1}{a_m(i)^{2^s}} \leq M \frac{1}{a_n(i)}$, for all $i \in \nn$. For some $\tilde{m}>m$, it follows by (G1-2) and the induction hypothesis that
	\[
		\frac{1}{a_{\tilde{m}}(i)^{2^{s+1}}} = \left(\frac{1}{a_{\tilde{m}}(i)^{2^s}}\right)^2 \leq \tilde{M} \frac{1}{a_m(i)^{2^s}} \leq \overline{M} \frac{1}{a_n(i)}, \quad \forall i \geq i_0, 
	\]
	where $\tilde{M}, \, \overline{M}>0$. 
\end{proof}

\begin{lemma}\label{infimum of SnA}
	Let $A$ be a $G_1$-Köthe matrix. If there exists $n_0 \in \nn$ with $S_{n_0}(A) \neq \varnothing$, then $s_0(A):=\inf_n\{\inf S_n(A)\}=1$.
\end{lemma}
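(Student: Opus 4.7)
The lower bound $s_0(A) \geq 1$ is already in hand: it was observed right before the statement that for any Köthe matrix satisfying (G1-1) with $S_n(A) \neq \varnothing$, one has $s_0(n) \geq 1$, and this bound passes to the infimum over $n$. So the real work is showing $s_0(A) \leq 1$, i.e., that for every $\varepsilon > 0$ one can locate some index $m$ with $1+\varepsilon \in S_m(A)$.

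The plan is to start from some $t \in S_{n_0}(A)$, meaning $\sum_i \frac{1}{i^t a_{n_0}(i)} < \infty$, and to trade the exponent $t$ for $1+\varepsilon$ by using the strong form of (G1-2) supplied by Lemma~\ref{G1 arbitrary}. Given $\varepsilon > 0$, I will choose an integer $k$ large enough that $k > (t-1)/\varepsilon$, then apply Lemma~\ref{G1 arbitrary} to produce $m > n_0$ and $M > 0$ with $a_{n_0}(i) \leq M\,a_m(i)^k$, equivalently $\tfrac{1}{a_m(i)} \leq M^{1/k}\,a_{n_0}(i)^{-1/k}$, for all $i \in \nn$.

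The key step is then a Hölder estimate: writing
\[
\frac{1}{i^{1+\varepsilon}\,a_m(i)} \leq M^{1/k}\,\frac{1}{i^{t/k}\,a_{n_0}(i)^{1/k}}\cdot\frac{1}{i^{1+\varepsilon - t/k}},
\]
I apply Hölder with conjugate exponents $k$ and $k/(k-1)$ to obtain
\[
\sum_i \frac{1}{i^{1+\varepsilon}\,a_m(i)} \leq M^{1/k}\left(\sum_i \frac{1}{i^t\,a_{n_0}(i)}\right)^{1/k}\left(\sum_i \frac{1}{i^{(1+\varepsilon-t/k)\,k/(k-1)}}\right)^{(k-1)/k}.
\]
The first factor is finite by hypothesis; the second is finite precisely when $(1+\varepsilon-t/k)k/(k-1) > 1$, which rearranges to $\varepsilon k > t-1$, and this is exactly how $k$ was chosen. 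So $\sum_i \frac{1}{i^{1+\varepsilon}\,a_m(i)} < \infty$, giving $1+\varepsilon \in S_m(A)$, hence $s_0(A) \leq 1+\varepsilon$. Letting $\varepsilon \to 0^+$ completes the proof.

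The main obstacle I anticipate is not any single step but the choice of how to split the exponent $1+\varepsilon = t/k + (1+\varepsilon - t/k)$ for the Hölder pairing; picking $k$ to depend on $\varepsilon$ and $t$ in the right way is what makes the remainder sum $\sum 1/i^{(1+\varepsilon - t/k)k/(k-1)}$ convergent. Everything else is routine once Lemma~\ref{G1 arbitrary} is invoked to upgrade (G1-2) to an arbitrary power.
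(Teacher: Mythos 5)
Your proof is correct, and its skeleton is the same as the paper's: both arguments take the lower bound $s_0(A)\geq 1$ from the remark preceding Lemma~\ref{Sn reduces to limit}, and both obtain the upper bound by invoking Lemma~\ref{G1 arbitrary} with a power $k$ chosen large relative to $t/\varepsilon$ so as to replace $1/a_{n_0}$ by the much smaller $1/a_m \lesssim (1/a_{n_0})^{1/k}$. Where you diverge is in the final convergence step. The paper first extracts a \emph{pointwise} bound from the hypothesis: since $\sum_i \frac{1}{i^t a_{n_0}(i)}<\infty$, the terms tend to zero, so $\frac{1}{a_{n_0}(i)}<i^t$ for $i$ large; raising to the power $1/k$ gives $\frac{1}{a_m(i)}\leq \tilde M i^{t/k}$ with $t/k<\varepsilon$, and then $\sum_i \frac{1}{i^{1+\varepsilon}a_m(i)}$ is compared directly with $\sum_i i^{-(1+\varepsilon-t/k)}$. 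You instead keep the full summability hypothesis and interpolate via H\"older with exponents $k$ and $k/(k-1)$; the condition $(1+\varepsilon-t/k)\frac{k}{k-1}>1$ rearranges to $\varepsilon k>t-1$, which your choice of $k$ guarantees (and which also forces $1+\varepsilon-t/k>0$, so the splitting of the exponent is legitimate). Both routes are valid; the paper's is marginally more elementary (no H\"older needed), while yours uses the hypothesis $t\in S_{n_0}(A)$ directly without passing through the tail estimate $\frac{1}{a_{n_0}(i)}<i^t$. The only cosmetic caveats in your version are that you should take $k\geq 2$ so that $k/(k-1)$ is a genuine conjugate exponent, and that the inequality of Lemma~\ref{G1 arbitrary} is only needed (and, strictly speaking, only proved in the paper) for $i$ beyond some $i_0$, which is harmless since convergence of the series depends only on the tail.
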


\begin{proof}
	Let $n_0 \in \nn$ with $S_{n_0}(A) \neq \varnothing$. Due to the remarks prior to Lemma~\ref{Sn reduces to limit} we only need to show that $s_0(n) \leq 1$. For some $t \in \rn$, $\sum_{i \in \nn} \frac{1}{i^t a_{n_0}(i)}<\infty$. Then there exists $i_0 \in \nn$ such that $\frac{1}{a_{n_0}(i)}<i^t$, for all $i \geq i_0$. Fix $\varepsilon>0$ and take $n_1 \in \nn$ with $\frac{t}{n_1}<\varepsilon$. By Lemma~\ref{G1 arbitrary}, we find $m>n_0$ and $M>0$ such that $\frac{1}{a_{m}(i)^{n_1}} \leq M \frac{1}{a_{n_0}(i)}$, for all $i \geq i_0$. Hence we have
	\[
		\frac{1}{a_m(i)} \leq M^{1/n_1} \frac{1}{a_{n_0}(i)^{1/n_1}} \leq \tilde{M} i^{t/n_1} < \tilde{M} i^\varepsilon, \quad \forall i \geq i_0.
	\]
	Now choose $0<\tilde{\varepsilon}<\varepsilon$ with $\frac{1}{a_m(i)} \leq \tilde{M}i^{\tilde{\varepsilon}}$. Therefore
	\[
		\sum_{i=i_0}^\infty \frac{1}{i^{1+\varepsilon}a_m(i)} \leq \tilde{M} \sum_{i=i_0}^\infty \frac{1}{i^{1+\varepsilon-\tilde{\varepsilon}}}< \infty,
	\]
	which means $s_0(m) \leq 1+\varepsilon$ implying that $s_0(A)=1$. 
\end{proof}

For each $r \geq 1$, define the open disk $D(r):=\{\lambda \in \cn:|\lambda-\frac{1}{2r}|<\frac{1}{2r}\}$. It is routine to establish that if $|\lambda-\frac{1}{2s_0(n)}|<\frac{1}{2s_0(n)}$, then $\operatorname{Re}(\frac{1}{\lambda}) > s_0(n)$.

\begin{proposition}
	Let $\kotz$ be a $G_1$-space with $S_{n_0}(A) \neq \varnothing,$ for some $n_0 \in \nn$. Then
	\begin{equation}\label{nonnuclear spectrum}
		D(1) \cup \{1\} = \bigcup_{n \in \nn} D(s_0(n)) \cup \Sigma \subseteq \spec.
	\end{equation}
\end{proposition}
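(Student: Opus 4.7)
My plan is to treat the two assertions separately: first the set identity $D(1)\cup\{1\}=\bigcup_n D(s_0(n))\cup\Sigma$, and then the spectral inclusion.

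For the identity, I will use the elementary equivalence (noted immediately before the statement) that $\lambda\in D(r)$ if and only if $\operatorname{Re}(1/\lambda)>r$, for $\lambda\neq 0$. Since $(s_0(n))_n$ is nonincreasing and $\inf_n s_0(n)=1$ by Lemma~\ref{infimum of SnA}, this gives
\[
\bigcup_{n\in\nn} D(s_0(n))=\{\lambda\in\cn\setminus\{0\}:\operatorname{Re}(1/\lambda)>1\}=D(1).
\]
Since $\operatorname{Re}(k)=k>1$ for every integer $k\geq 2$, all of $1/2, 1/3,\dots$ already lie in $D(1)$, while $1\notin D(1)$. Hence adjoining $\Sigma$ contributes only the extra point $1$, producing exactly $D(1)\cup\{1\}$.

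For the inclusion I will construct, for each nonzero $\lambda\in D(1)\cup\{1\}$, an eigenvector of the transposed operator $\ces'$ in $\kotz'=\bigcup_{n\in\nn}\ell_1(1/a_n)$; Lemma~\ref{spec_inclusion} will then deliver $\lambda\in\spec$. The natural candidate, coming from the recursion~(\ref{recurrence_relation}), is
\[
y_1:=1,\qquad y_i:=\prod_{j=1}^{i-1}\left(1-\frac{1}{\lambda j}\right),\quad i\geq 2.
\]
For $\lambda=1/s\in\Sigma$ the factor at $j=s$ vanishes, so $y\in c_{00}\subseteq \kotz'$; this is literally the argument of step~(ii) of the proof of Proposition~\ref{spectrum is sigma}, which never used nuclearity. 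For $\lambda\in D(s_0(n))$, setting $s:=\operatorname{Re}(1/\lambda)>s_0(n)$, the classical asymptotic
\[
\log|y_i|=\sum_{j=1}^{i-1}\log\left|1-\frac{1}{\lambda j}\right|=-s\log i+O(1),
\]
derived from the Taylor expansion $\log|1-1/(\lambda j)|=-s/j+O(1/j^2)$, will yield $|y_i|\leq C_\lambda\, i^{-s}$ for some $C_\lambda>0$. Because $S_n(A)$ is upward closed with infimum $s_0(n)<s$, the exponent $s$ lies in $S_n(A)$, so $\sum_i|y_i|/a_n(i)\leq C_\lambda\sum_i 1/(i^s a_n(i))<\infty$ and $y\in\ell_1(1/a_n)\subseteq\kotz'$.

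The main obstacle will be establishing the asymptotic bound $|y_i|\leq C_\lambda\, i^{-s}$. Because the error $O(1/j^2)$ is summable, it contributes only a bounded additive constant to $\log|y_i|$, and the finitely many initial indices $j$ where $|1-1/(\lambda j)|$ is not close to $1$ get absorbed into $C_\lambda$; no uniformity in $\lambda$ is required, since eigenvectors are handled one $\lambda$ at a time. Once this estimate is in hand, the rest of the argument — verifying $\ces' y=\lambda y$ formally, checking weighted $\ell_1$-summability, and invoking Lemma~\ref{spec_inclusion} — is bookkeeping.
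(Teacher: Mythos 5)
Your proposal is correct and follows essentially the same route as the paper: for $\lambda\in D(s_0(n))\setminus\Sigma$ one builds the dual eigenvector $y_i=y_1\prod_{j=1}^{i-1}(1-\tfrac{1}{\lambda j})$, bounds $|y_i|\leq C_\lambda i^{-\operatorname{Re}(1/\lambda)}$ (the paper simply cites Read's Lemma 7 for the product estimate you re-derive via Taylor expansion), uses $\operatorname{Re}(1/\lambda)>s_0(n)$ to place $y$ in $\ell_1(1/a_n)\subseteq\kotz'$, and concludes non-surjectivity of $\ces-\lambda I$; the points of $\Sigma$ are handled by the finitely supported eigenvectors exactly as in Proposition~\ref{spectrum is sigma}(ii). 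Your explicit verification of the set identity $D(1)\cup\{1\}=\bigcup_n D(s_0(n))\cup\Sigma$ is a welcome addition that the paper only sketches via $\lim_n\frac{1}{2s_0(n)}=\frac12$.
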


\begin{proof}
	Lemma~\ref{infimum of SnA} yields $s_0(A)=1$. We get $\Sigma \subseteq \spec$ directly by Proposition~\ref{spectrum is sigma} part (iii) since the proof is independent of nuclearity. Fix $n \in \nn$ and let $\lambda \in \cn \setminus \Sigma$ satisfy $|\lambda-\frac{1}{2s_0(n)}|<\frac{1}{2s_0(n)}$ so that $\lambda \in D(s_0(n))\setminus \Sigma$. For any $y_1 \in \cn\setminus \{0\}$ define $y \in \cnn\setminus \{0\}$ by $y_{i+1}:=y_1 \prod_{j=1}^i (1-\frac{1}{j\lambda})$ for $i \in \nn$. Then $\ces'_ny=\lambda y$, where $\ces'_n\colon \ell_1(1/a_n)\to \ell_1(1/a_n)$ is the dual of the Cesàro operator $\ces_n\colon c_0(a_n) \to c_0(a_n)$. Moreover,
	\[
	\sumi  \frac{|y_i|}{a_n(i)} = \frac{|y_1|}{a_n(1)}+|y_1|\sum_{i=2}^\infty \prod_{j=1}^i \left|1-\frac{1}{j\lambda}\right|\frac{1}{a_n(i)} \leq \frac{|y_1|}{a_n(1)}+C|y_1| \sum_{i=2}^\infty\frac{1}{i^\alpha a_n(i)},
	\]
	where $\alpha=\operatorname{Re}(\frac{1}{\lambda}),\, C>0$, and the inequality is due to the estimate in \cite[Lemma 7]{Rea85}. But since $\alpha>s_0(n) \geq 1$, we have $\alpha \in S_n(A)$ and hence $y \in \kotz'$. For $z \in \kotz \subseteq c_0(a_n)$ the vector $(\ces-\lambda I)z$ with $\ces \in \clo(\kotz)$ belongs to $c_0(a_n)$. Since $y \in c_0(a_n)'\subseteq \kotz'$ it follows that $\langle (\ces-\lambda I)z,y \rangle=\langle (\ces_n-\lambda I)z,y \rangle=\langle z, (\ces'_n-\lambda I)y \rangle=0$. Thus $\langle u,y \rangle=0$ for every $u \in \overline{\operatorname{Im}(\ces-\lambda I)} \subseteq \kotz$ with $y \in \kotz'\setminus \{0\}$. Then $\ces-\lambda I \in \clo(\kotz)$ is not surjective and hence $\lambda \in \spec$. This implies $D(s_0(n)) \setminus \Sigma \subseteq \spec$ and so $\Sigma \cup D(s_0(n)) \subseteq \spec$. Since $n \in \nn$ was arbitary and $\lim_n \frac{1}{2s_0(n)}=\frac{1}{2}$, by Lemma~\ref{infimum of SnA} we establish~(\ref{nonnuclear spectrum}). 
\end{proof}

\section{Iterates of $\ces$ and mean ergodicity}
Let $X$ be a Fréchet space topologized by the increasing system of seminorms $(p_n(\cdot))_{n \in \nn}$. Then, the strong operator topology $\tau_s$ in $\clo(X)$ is determined by the seminorms
\[
p_{n,x}(S):= p_n(Sx), \quad \forall x \in X, \, \forall n \in \nn,
\]
where $S \in \clo(X)$. In this case we write $\clo_s(X)$. Let $\mathcal{B}(X)$ be the family of bounded subsets of $X$. Then, the uniform topology $\tau_b$ in $\clo(X)$ is defined by the family of seminorms
\[
p_{n,B}(S):=\sup_{x \in B} p_n(Sx), \forall n \in \nn, \, \forall B \in \mathcal{B}(X),
\]
where $S \in \clo(X)$. In this case we write $\clo_b(X)$. $\tau_b$ is called the topology of uniform convergence on bounded subsets of $X$.

An operator $T \in \clo(X)$, where $X$ is a Fréchet space, is \textit{power bounded} if $(T^k)_{k=1}^\infty$ is an equicontinuous subset of $\clo(X)$. Given $T \in \clo(X)$, the averages
\[
	T_{[k]}:=\frac{1}{k} \sum_{j=1}^k T^j, \quad k \in \nn,
\]
are called the \textit{Cesàro means} of $T$. The operator $T$ is said to be \textit{mean ergodic} (resp., \textit{uniformly mean ergodic}) if $(T_{[k]})_k$ is a convergent sequence in $\clo_s(X)$ (resp., in $\clo_b(X)$). A Fréchet space $X$ is called mean ergodic if every power bounded operator on $X$ is mean ergodic.
\begin{proposition}\label{mean ergodic}
	Let $\kotz$ be a $G_1$-space. Then $\ces\colon \kotz \to \kotz$ is power bounded and uniformly mean ergodic. In particular, 
\begin{equation}
	\kotz=\ker(I-\ces) \oplus \overline{(I-\ces)(\kotz)}. 
\end{equation}
	Moreover, $\ker(I-\ces)=\operatorname{span}\{\boldsymbol{1}\}$ and 
\begin{equation}	
	\overline{(I-\ces)(\kotz)}=\{x \in \kotz:x_1=0\}=\overline{\operatorname{span}(e_i)_{i \geq 2}}.
\end{equation}
\end{proposition}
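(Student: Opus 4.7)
The plan is to proceed in four steps, exploiting the two $G_1$-axioms, the standing assumption~(\ref{sequences vanish}), and standard Yosida-type ergodic theory.

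\emph{Step 1: Power boundedness.} The key observation is that (G1-1) makes each row $a_n$ non-increasing in $i$, so $a_n(i)\le a_n(j)$ whenever $j\le i$. This immediately gives the sharp estimate $p_n(\ces x)\le p_n(x)$ for every $n\in\nn$ and $x\in\kotz$, since each term $a_n(i)|x_j|$ in the average defining $(\ces x)_i$ is bounded by $a_n(j)|x_j|\le p_n(x)$. Iterating yields $p_n(\ces^k x)\le p_n(x)$ for every $k\in\nn$, so $(\ces^k)_k$ is equicontinuous and $\ces$ is power bounded. In particular $\tfrac{1}{k}\ces^k x\to 0$ in $\kotz$ for each $x$.

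\emph{Step 2: Mean ergodicity and its upgrade to the uniform version.} By~(\ref{sequences vanish}), $\kotz$ is Fréchet--Schwartz and hence Fréchet--Montel, in particular reflexive. I would then invoke a Yosida--Kakutani-type mean ergodic theorem for reflexive Fréchet spaces (as used in \cite{ABR13, ABR18}) to deduce that the power bounded operator $\ces$ is mean ergodic, so there exists $P:=\lim_k \ces_{[k]}$ in $\clo_s(\kotz)$. The delicate point is the passage to $\clo_b(\kotz)$: the sequence $(\ces_{[k]})_k$ is equicontinuous (being a sequence of convex combinations of an equicontinuous family), and in a Fréchet--Montel space bounded sets are relatively compact, so on equicontinuous subsets of $\clo(\kotz)$ the topology $\tau_s$ of pointwise convergence coincides with $\tau_b$. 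Hence $\ces_{[k]}\to P$ in $\clo_b(\kotz)$ as well, i.e., $\ces$ is uniformly mean ergodic.

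\emph{Step 3: The Yosida decomposition and $\ker(I-\ces)$.} Once $P$ exists, standard Yosida theory gives $P^2=P$, $\operatorname{Ran}P=\ker(I-\ces)$, and $\ker P=\overline{(I-\ces)(\kotz)}$, hence $\kotz=\ker(I-\ces)\oplus\overline{(I-\ces)(\kotz)}$. To identify $\ker(I-\ces)$, I would solve $\ces x=x$ recursively: the identity $x_i=\tfrac{1}{i}(x_1+\dots+x_i)$ forces $x_i=x_1$ for all $i$ by induction on $i$, and $\boldsymbol{1}\in\kotz$ thanks to~(\ref{sequences vanish}), so $\ker(I-\ces)=\operatorname{span}\{\boldsymbol{1}\}$.

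\emph{Step 4: Identification of the closed range.} Since $(\ces^j x)_1=x_1$ for every $j$, the first coordinate is preserved by each $\ces_{[k]}$, so $(Px)_1=x_1$. Combined with $\operatorname{Ran}P\subseteq\operatorname{span}\{\boldsymbol{1}\}$ from Step 3, this forces $Px=x_1\boldsymbol{1}$, so $\ker P=\{x\in\kotz:x_1=0\}$. The equality $\{x\in\kotz:x_1=0\}=\overline{\operatorname{span}(e_i)_{i\ge 2}}$ is then immediate from the fact that the canonical unit vectors form a Schauder basis of $\kotz$, which is standard for Köthe echelon spaces of order $0$. The main obstacle I foresee is Step 2, namely pinning down the precise reference (or giving a short hands-on Ascoli-type argument) to upgrade strong to uniform mean ergodicity on the Fréchet--Montel space $\kotz$; the rest is either a direct computation with (G1-1) or an application of the Yosida decomposition.
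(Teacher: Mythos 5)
Your proposal is correct and follows essentially the same route as the paper: power boundedness from the direct estimate $p_n(\ces x)\le p_n(x)$ via (G1-1), then the Schwartz--Montel property of $\kotz$ (under~(\ref{sequences vanish})) to obtain mean ergodicity and upgrade it to uniform mean ergodicity, followed by the standard identification of $\ker(I-\ces)$ and $\overline{(I-\ces)(\kotz)}$. The only difference is that the paper delegates the kernel/range identification to \cite[Proposition 4.1]{ABR13}, whereas you carry out that (correct) computation explicitly.
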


\begin{proof}
	Let $\ces^k$ denote the $k$-th iterate of $\ces$. Pick an arbitary $x \in \kotz$. For $k=1$, one has $p_n(\ces x) \leq p_n(x)$ for any $n \in \nn$. Suppose that $p_n(\ces^r x) \leq p_n(x)$ for some $r \in \nn$. Then,
	\[
		p_n(\ces^{r+1}x) = p_n(\ces(\ces^rx)) \leq p_n(\ces^rx) \leq p_n(x).
	\]
	This implies $(\ces^k)_{k=1}^\infty$ is equicontinuous, hence $\ces$ is power bounded on $\kotz$. But since $\kotz$ is a $G_1$-space with $a_n(i) \igoes 0$, for all $n \in \nn$, $\kotz$ is a Schwartz space. In particular, it is a Montel space and hence a mean ergodic space \cite[Proposition 2.13]{ABR09}. Hence $\ces$ is uniformly mean ergodic. The facts that $\ker(I-\ces)=\operatorname{span}\{\boldsymbol{1}\}$ and $\overline{(I-\ces)(\kotz)}=\{x \in \kotz:x_1=0\}=\overline{\operatorname{span}(e_i)_{i \geq 2}}$ follow by applying the same arguments used in \cite[Proposition 4.1]{ABR13}. 
\end{proof}

\begin{proposition}
	Let $\kotz$ be a nuclear $G_1$-space. Then $(I-\ces)^m(\kotz)$ is a closed subspace of $\kotz$ for each $m \in \nn$.
\end{proposition}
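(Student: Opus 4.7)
The plan is to exhibit a $\ces$-invariant topological decomposition $\kotz = F \oplus V_1$, with $F := \operatorname{span}\{\mathbf{1}\}$ and $V_1 := \{x \in \kotz : x_1 = 0\}$, and then to show that $(I - \ces)$ restricts to a topological isomorphism of $V_1$. Since $(I - \ces)$ annihilates $F$ and $V_1$ is the kernel of the continuous functional $x \mapsto x_1$, this will force $(I - \ces)^m(\kotz) = V_1$ for every $m \in \nn$, which is closed.

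The splitting itself is routine: $V_1$ is a closed hyperplane with one-dimensional topological complement $F$, and both summands are $\ces$-invariant since $\ces\mathbf{1} = \mathbf{1}$ and $(\ces x)_1 = x_1$. Injectivity of $(I - \ces)|_{V_1}$ is immediate from Proposition~\ref{mean ergodic}, which identifies $\ker(I - \ces)$ with $F$. For surjectivity, given $y \in V_1$, I would solve $(I - \ces)x = y$ by the explicit formula
\[
	x_i = y_i + \sum_{k=2}^{i} \frac{y_k}{k-1}, \qquad i \in \nn,
\]
obtained by differencing successive rows of $iy_i = (i-1)x_i - \sum_{k=1}^{i-1} x_k$ after setting $x_1 = 0$; the choice $x_1 = 0$ automatically places $x$ in $V_1$.

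The main obstacle is to verify that this candidate $x$ belongs to $\kotz$, and this is precisely where nuclearity enters. Given $n \in \nn$, Proposition~\ref{nuclearity_criterion1}(2) furnishes $m > n$ and $M > 0$ with $ia_n(i) \leq M a_m(i)$ for all $i$, while (G1-1) gives $a_m(k) \geq a_m(i)$ whenever $k \leq i$. Estimating $|y_k| \leq p_m(y)/a_m(k)$ and the harmonic sum by $\log(i) + 1$, one obtains
\[
	a_n(i) \sum_{k=2}^{i} \frac{|y_k|}{k-1} \leq p_m(y) \, \frac{a_n(i)}{a_m(i)} \, (\log(i)+1) \leq M p_m(y) \, \frac{\log(i)+1}{i} \igoes 0.
\]
Combined with $a_n(i)|y_i| \igoes 0$, this shows $a_n(i)|x_i| \igoes 0$ for every $n$, so $x \in V_1$. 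By the open mapping theorem for Fréchet spaces, $(I - \ces)|_{V_1}$ is then a topological isomorphism, whence $(I - \ces)^m V_1 = V_1$ for every $m$; together with $(I - \ces)^m F = 0$, this yields $(I - \ces)^m(\kotz) = V_1$, which is closed in $\kotz$.
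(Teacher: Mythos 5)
Your argument is correct and follows essentially the same route as the paper: both restrict $I-\ces$ to the hyperplane $\{x\in\kotz: x_1=0\}$, invert it explicitly (your formula for $x_i$ is precisely the inverse matrix $R$ that the paper computes, read in unshifted coordinates), and use the nuclearity criterion $ia_n(i)\leq Ma_m(i)$ together with (G1-1) to control the harmonic-type sum. The only differences are in packaging: you check that the preimage lies in $\kotz$ by a direct estimate and invoke the open mapping theorem, whereas the paper conjugates by the shift and verifies a $c_0$-matrix continuity criterion, and you obtain the case of general $m$ at once from $(I-\ces)^m(\kotz)=\{x:x_1=0\}$ instead of citing an external remark.
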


\begin{proof}
	Using Proposition~\ref{mean ergodic}, we proceed as in the proof of the analogous result when $\ces$ acts on the power series space of finite type, $\Lambda_0(\alpha)$ \cite[Proposition 3.4]{ABR18}. Consider first $m=1$. By Proposition~\ref{mean ergodic} we have $\overline{(I-\ces)(\kotz)}=\{x \in \kotz:x_1=0\}$. Set $X_1(A):=\{x \in \kotz:x_1=0\} \subseteq \kotz$. Clearly $(I-\ces)(\kotz)\subseteq X_1(A)$. We claim that $(I-\ces)(X_1(A))=(I-\ces)(\kotz)$. One inclusion is obvious. To establish the other, observe that
	\begin{equation}\label{(I-C)x}
		(I-\ces)x=\left(0,x_2-\frac{x_1+x_2}{2},x_3-\frac{x_1+x_2+x_3}{3}, \dots\right), \, x=(x_i)_i \in \kotz,
	\end{equation}
	and in particular
	\begin{equation}\label{(I-C)y}
		(I-\ces)y=\left(0, \frac{y_2}{2}, y_3-\frac{y_2+y_3}{3}, y_4-\frac{y_2+y_3+y_4}{4}, \dots \right), \, y=(y_i)_i \in X_1(A).
	\end{equation}
	Fix $x \in \kotz$. It follows from~(\ref{(I-C)x}) that
	\begin{equation}\label{xj}
		x_j-\frac{1}{j}\sum_{k=1}^jx_k=\frac{1}{j}\left((j-1)x_j-\sum_{k=1}^{j-1}x_k\right), \quad j \geq 2,
	\end{equation}
	is the $j$-th coordinate of the vector $(I-\ces)x$. Set $y_i:=x_i-x_1$ for all $i \in \nn$ and observe that $y \in X_1(A)$ because $(0,1,1,1,\dots) \in \kotz$. Now, by~(\ref{(I-C)y}), the $j$-th coordinate of $(I-\ces)y$ is given by~(\ref{xj}) for $j \geq 2$. So $(I-\ces)x=(I-\ces)y \in (I-\ces)(X_1(A))$.
	
	To show that $(I-\ces)(\kotz)$ is closed in $\kotz$ is suffices to prove that the continuous linear operator $(I-\ces)|_{X_1(A)}\colon X_1(A) \to X_1(A)$ is bijective. It is clearly injective by~(\ref{(I-C)y}), so we prove it is surjective. Observe $X_1(A)=\bigcap_{n \in \nn} X^{(n)}$ with $X^{(n)}:=\{x \in c_0(a_n):x_1=0\}$ is a closed subspace of $c_0(a_n)$ for all $n \in \nn$. Set $\tilde{a}_n(i):=a_n(i+1)$, and $\tilde{A}=(\tilde{a}_n)_n$, for all $i, \, n \in \nn$. Then $X_1(A)$ is topologically isomorphic to $\lambda_0(\tilde{A}):=\bigcap_{n \in \nn} c_0(\tilde{a}_n)$ via the left shift operator $S\colon X_1(A) \to \lambda_0(\tilde{A})$ given by $S(x)=(x_2,x_3,\dots)$ for $x=(x_1,x_2,\dots) \in X_1(A)$. We claim that the operator $T\colon =S \circ (I-\ces)|_{X_1(A)} \circ S^{-1} \in \clo(\lambda_0(\tilde{A}))$ is bijective with $T^{-1} \in \clo(\lambda_0(\tilde{A}))$.
	
	To verify the claim, first we observe that the operator $T\colon \cnn \to \cnn$ is bijective, and its inverse $R:=T^{-1}\colon \cnn \to \cnn$ is determined by direct calculation as a lower triangular matrix $R=r_{ij}$ with entries
	\[
		r_{ij}=
		\begin{cases}
			\displaystyle \frac{1}{j} & \text{ if } 1 \leq j < i\vspace{0.25cm} \\
			\displaystyle \frac{i+1}{i} & \text{ if } i=j\vspace{0.25cm} \\
			0 & \text{ if } j>i.
		\end{cases}
	\]
	To show that $R$ is also the inverse of $T$ on $\lambda_0(\tilde{A})$, we need to verify $R \in \clo(\lambda_0(\tilde{A}))$. This is equivalent to show that for each $n \in \nn$ there exists $m>n$ such that $\phi_n \circ R \circ \phi^{-1}_m \in \clo(c_0)$, where the operator $\phi_k\colon c_0(\tilde{a}_k) \to c_0$ is given by $\phi_k(x)=(\tilde{a}_k(i)x_i)_i$ for $x \in c_0(\tilde{a}_k)$ is a surjective isometry. Whenever $n \in \nn$ and $m>n$, the lower triangular matrix corresponding to $\phi_n \circ R \circ \phi^{-1}_m$ is given by $D_{mn}:=\left(\frac{a_n(i+1)}{a_m(j+1)}r_{ij}\right)_{i,j}$. For each fixed $j \in \nn$, $D_{mn}$ satisfies
	\[
		\limi \frac{a_n(i+1)}{a_m(j+1)}r_{ij}=\frac{1}{ja_m(j+1)}\limi a_n(i+1)=0.
	\]
	Moreover, by (G1-1) and the nuclearity criterion~(\ref{nuclearity with i}),
	\begin{align*}
		\sumj  \frac{a_n(i+1)}{a_m(j+1)}r_{ij} & =\sum_{j=1}^{i-1} \frac{a_n(i+1)}{ja_m(j+1)}+\frac{i+1}{i}\frac{a_n(i+1)}{a_m(i+1)} \\
		& \leq \frac{a_n(i+1)}{a_m(i+1)} \sum_{j=1}^{i-1}\frac{1}{j}+\frac{i+1}{i}\frac{a_n(i+1)}{a_m(i+1)} \\
		& = \frac{a_n(i+1)}{a_m(i+1)}\left(\sum_{j=1}^{i-1}\frac{1}{j}+\frac{i+1}{i}\right) \\
		& \leq (i+1) \frac{a_n(i+1)}{a_m(i+1)} < \infty,
	\end{align*}
	Thus, by \cite[Theorem 4.51-C]{Tay58}, $\phi_n \circ R \circ \phi^{-1}_m \in \clo(c_0)$, as desired. Hence $(I-\ces)(\kotz)$ is closed in $\kotz$. Since $(I-\ces)(\kotz)$ is closed, it follows from Proposition~\ref{mean ergodic} that $\kotz=\ker(I-\ces)\oplus(I-\ces)(\kotz)$. Hence, by \cite[Remark 3.6]{ABR13}, $(I-\ces)^m(\kotz)$ is closed for all $m \in \nn$.   
\end{proof}

A Fréchet space operator $T \in \clo(X)$, where $X$ is separable, is called \textit{hypercyclic} if there exists $x \in X$ such that the orbit $\{T^kx:k \in \nn_0\}$ is dense in $X$. If, for some $z \in X$, the projective orbit $\{\lambda T^kz:\lambda \in \cn, k \in \nn_0\}$ is dense in $X$, then $T$ is called \textit{supercyclic}. Clearly, if $T$ is hypercyclic then $T$ is supercyclic.

\begin{proposition}
	Let $A$ be a Köthe matrix. Then $\ces \in \clo(\kotz)$ is not supercyclic, and hence not hypercyclic either.
\end{proposition}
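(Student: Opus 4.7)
The plan is to exhibit a continuous linear surjection $\phi\colon\kotz\to\cn^2$ such that, no matter which vector $z\in\kotz$ one feeds in, the $\phi$-image of its projective orbit cannot be dense in $\cn^2$; this will contradict supercyclicity (and hence hypercyclicity). The ingredients are the two explicit eigenvectors of $\ces'$ uncovered inside the proof of Proposition~\ref{spectrum is sigma}(ii): a direct calculation gives $\ces'e_1=e_1$ and $\ces'(e_1-e_2)=\tfrac{1}{2}(e_1-e_2)$, and since both sequences are finitely supported they lie in $\kotz'$ for \emph{every} Köthe matrix $A$, regardless of any further assumption.

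Accordingly, I would set
\[
\phi(x):=(\langle x,e_1\rangle,\,\langle x,e_1-e_2\rangle)=(x_1,\,x_1-x_2),\quad x\in\kotz,
\]
which is continuous and surjective because $\phi(e_1)=(1,1)$ and $\phi(e_2)=(0,-1)$ span $\cn^2$. If some $z\in\kotz$ were supercyclic, then density of $\{\mu\ces^kz:\mu\in\cn,k\in\nn_0\}$ in $\kotz$ combined with continuity and surjectivity of $\phi$ would force its $\phi$-image to be dense in $\cn^2$. Using the identity $\langle\ces^kz,u\rangle=\lambda^k\langle z,u\rangle$ whenever $\ces'u=\lambda u$, one obtains at once
\[
\phi(\mu\ces^kz)=\mu\Bigl(z_1,\tfrac{z_1-z_2}{2^k}\Bigr),\quad \mu\in\cn,\,k\in\nn_0.
\]

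In the degenerate cases $z_1=0$ or $z_1=z_2$ the right-hand side lies in a single proper complex line through the origin, which is manifestly not dense in $\cn^2$. Otherwise the image is precisely the countable union $\bigcup_{k\geq 0}L_k$ of complex lines $L_k:=\cn\cdot(z_1,(z_1-z_2)/2^k)$, and the heart of the proof will be to show that this union fails to be dense in $\cn^2$. Analyzing convergent sequences drawn from the $L_k$'s---those confined to finitely many $L_k$ stabilize inside one of them, those straddling $k\to\infty$ with bounded scalar converge into $L_\infty:=\cn\times\{0\}$, while unbounded ones fail to converge---one finds that the closure of $\bigcup_kL_k$ coincides with $\bigcup_kL_k\cup L_\infty$, a \emph{closed} countable union of proper complex subspaces of $\cn^2$.

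The main subtlety I anticipate is that being meager does not, in general, preclude density (witness $\mathbb{Q}\subset\rn$), which is precisely why one must first pass to the closure: the closure above is closed and meager, being a countable union of nowhere-dense subsets of the Baire space $\cn^2$, so by the Baire category theorem it cannot equal $\cn^2$; the original union $\bigcup_kL_k$ is therefore not dense in $\cn^2$. This yields the sought contradiction and shows $\ces$ is not supercyclic; non-hypercyclicity follows from the implication hypercyclic $\Rightarrow$ supercyclic noted just before the statement.
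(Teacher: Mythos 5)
Your argument is correct, but it is genuinely different from what the paper does: the paper disposes of this proposition in one line by citing \cite[Proposition 4.13]{ABRwc0}, whereas you give a self-contained proof. What you have reconstructed is the classical obstruction that a supercyclic operator cannot have a transpose with two (linearly independent eigenvectors for two) distinct eigenvalues, specialised to the eigenvectors $e_1$ and $e_1-e_2$ of $\ces'$ with eigenvalues $1$ and $\tfrac12$. All the steps check out: the identities $\ces'e_1=e_1$ and $\ces'(e_1-e_2)=\tfrac12(e_1-e_2)$ follow from the formula $(\ces'y)_i=\sum_{j\geq i}y_j/j$; finitely supported functionals are continuous on $\kotz$ for \emph{any} Köthe matrix by (K2), so no $G_1$ or nuclearity hypothesis is needed, matching the generality of the statement; the computation $\phi(\mu\ces^kz)=\mu\bigl(z_1,(z_1-z_2)/2^k\bigr)$ is right; and your closure analysis of $\bigcup_kL_k$ plus the Baire category theorem correctly rules out density in $\cn^2$ (one could shortcut this last step by observing that $(z_1,c)$ lies in the closure only if $c=0$ or $c=(z_1-z_2)/2^k$ for some $k$, a countable set, but your route is sound and you rightly flag that meagerness alone would not suffice). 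What your approach buys is transparency and independence from the external reference; what the paper's citation buys is brevity, since \cite{ABRwc0} presumably packages exactly this adjoint-eigenvalue argument. Either way the conclusion, including the reduction of non-hypercyclicity to non-supercyclicity, stands.
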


\begin{proof}
	Follows from \cite[Proposition 4.13]{ABRwc0}. 
\end{proof}

\section*{Acknowledgements}
The author wishes to thank Prof. José Bonet for crucial suggestions and discussions. He is also thankful to the anonymous referees as well as Prof. David Jornet for their careful reviewing.
\bibliography{cesaro}
\bibliographystyle{spmpsci}
\end{document}